\author{Quentin M\'erigot}
\email{Quentin.Merigot@sophia.inria.fr}
\address{INRIA Sophia-Antipolis\\
2004 route des Lucioles\\
06560 Sophia-Antipolis \\
France
}
\title[Size of the medial axis and stability of curvature measures]
{Size of the medial axis and stability of Federer's curvature measures}
\begin{document}

\begin{abstract}
In this article, we study the $(d-1)$-volume and the covering numbers
of the medial axis of a compact subset of $\RR^d$. In general, this
volume is infinite; however, the $(d-1)$-volume and covering numbers
of a filtered medial axis (the $\mu$-medial axis) that is at distance
greater than $\eps$ from the compact set can be explicitely
bounded. The behaviour of the bound we obtain with respect to $\mu$,
$\eps$ and the covering numbers of $K$ is optimal.

From this result we deduce that the projection function on a compact
subset $K$ of $\RR^d$ depends continuously on the compact set $K$, in
the $\LL^1$ sense. This implies in particular that Federer's curvature
measure of a compact subset of $\RR^d$ with positive reach can be
reliably estimated from a Hausdorff approximation of this subset,
regardless of any regularity assumption on the approximating subset.
\end{abstract}

\begin{comment}
In this article, we study the (d-1)-volume and the covering numbers of
the medial axis of a compact set of the Euclidean d-space. In general,
this volume is infinite; however, the (d-1)-volume and covering
numbers of a filtered medial axis (the mu-medial axis) that is at
distance greater than R from the compact set will be explicitely
bounded. The behaviour of the bound we obtain with respect to mu, R
and the covering numbers of the compact set K are optimal.

From this result we deduce that the projection function on a compact
subset K of the Euclidean d-space depends continuously on the compact
set K, in the L^1 sense. This implies in particular that Federer's
curvature measure of a compact subset with positive reach can be
reliably estimated from a Hausdorff approximation of this subset,
regardless of any regularity assumption on the approximating subset.
\end{comment}

\maketitle

\section{Introduction}

We are interested in the following question: given a compact set $K$
with positive reach, and a discrete approximation, is it possible to
approximate Federer's curvature measures of (see \cite{federer1959cm}
or \S \ref{sec:federer} for a definition) knowing the discrete
approximation only ? A positive answer to this question has been given
in \cite{ccsm2009boundary} using convex analysis. In this article, we
show that such a result can also be deduced from a careful study of
the ``size'' --- that is the covering numbers --- of the medial axis.

The notion of medial axis\footnote{also known as ambiguous locus in
  Riemannian geometry} has many applications in computer science. In
image analysis and shape recognition, the skeleton of a shape is often
used as an idealized version of the shape \cite{sonka1999image}, that
is known to have the same homotopy type as the original shape
\cite{lieutier2004aob}. In the reconstruction of curves and surfaces
from point cloud approximations, the distance to the medial axis
provides a estimation of the size of the local features that can be
used to give sampling conditions for provably correct reconstruction
\cite{amenta1999srv}. The flow associated with the distance function
$\d_K$ to a compact set $K$, that flows away from $K$ toward local
maxima of $\d_K$ (that lie in the medial axis of $K$) can be used for
shape segmentation \cite{chazal2009analysis}. The reader that is
interested by the computation and stability of the medial axis with
some of these applications in mind can refer to the survey
\cite{attali2007stability}.

The main technical ingredient needed for bounding the covering numbers
of the subsets of the medial axis that we consider is a Lipschitz
regularity result for the so-called normal distance to the medial
axis.  When $K$ is a compact submanifold of class $\Class^{2,1}$, this
function is globally Lipschitz on any $r$-level set of the distance
function to $K$, when the radius $r$ is small enough
\cite{itoh2001lipschitz, li2005distance, castelpietra2009regularity}.
When $K$ is the analytic boundary of a bounded domain $\Omega$ of
$\RR^2$, the normal distance to the medial axis of $\partial \Omega$
is $2/3$-H\"older on $\Omega$ \cite{cannarsa2007holder}.

However, without strong regularity assumption on the compact set $K$,
it is hopeless to obtain a global Lipschitz regularity result for
$\tau_K$ on a parallel set of $K$. Indeed, such a result would imply
the finiteness of $(d-1)$-Hausdorff measure of the medial axis, which
is known to be false --- for instance, the medial axis of a generic
compact set is dense.

We show however, that the normal distance to the medial axis is
Lipschitz on a suitable subset of a parallel set. This enables us to
prove the following theorem on the covering numbers of the
$\mu$-medial axis (see \S \ref{sec:mumedial} for a definition):

\begin{theoremA}
For any compact set $K\subseteq \RR^d$, a parameter $\eps$ smaller
than the diameter of $K$, and $\eta$ small enough,
\begin{equation*}
\LebNum\left(\Medial_\mu(K) \cap
(\RR^d\setminus K^{\eps}), \eta\right) \leq
 \LebNum(\partial K, \eps/2)
\BigO\left(\left[\frac{\diam(K)}{\eta\sqrt{1-\mu}}\right]^{d-1}\right) 
\end{equation*}
\end{theoremA}

From this theorem, we deduce a quantitative Hausdorff-stability
results for projection function, which is the key to the stability of
Federer's curvature measure (see Proposition \ref{prop:wass}):

\begin{theoremB}
Let $E$ be a bounded open set of $\RR^d$. The application that
maps a compact subset of $\RR^d$ to the projection function $\p_K\in
\LL^1(E)$ is locally $h$-H\"older, for any exponent $h$ smaller than
$1/(4d-2)$.
\end{theoremB}

Note that a similar result with a slightly better H\"older exponent
has been obtained in \cite{ccsm2009boundary}. However, the proofs in
this article give are very different and give a more geometric insight
on the Hausdorff-stability of projection functions. Nonetheless, the
main contribution of this article lies in
Theorem~\ref{th:covering:mumedial}.

\section{Boundary measures and medial axes}

\subsection{Distance, projection, boundary measures}
Throughout this article, $K$ will denote a compact set in the
Euclidean $d$-space $\RR^d$, with no additional regularity assumption
unless specified otherwise. The \emph{distance function} to $K$,
denoted by $\d_K: X \to \RR^+$, is defined by $d_K(x) = \min_{p\in K}
\nr{p-x}$. A point $p$ of $K$ that realizes the minimum in the
definition of $\d_K(x)$ is called an \emph{orthogonal projection} of
$x$ on $K$. The set of orthogonal projections of $x$ on $K$ is denoted
by $\proj_K(x)$.

The locus of the points $x \in \RR^d$ which have more than one
projection on $K$ is called the \emph{medial axis} of $K$.  Denote
this set by $\Medial(K)$.  For every point $x$ of $\RR^d$ not lying in
the medial axis of $K$, we let $\p_K(x)$ be the unique orthogonal
projection of $x$ on $K$. This defines a map $\p_K: \RR^d
\setminus{\Medial(K)} \to K$, which we will refer to as the
\emph{projection function} on the compact set $K$.

\begin{figure}
\centering
\includegraphics[width=\textwidth]{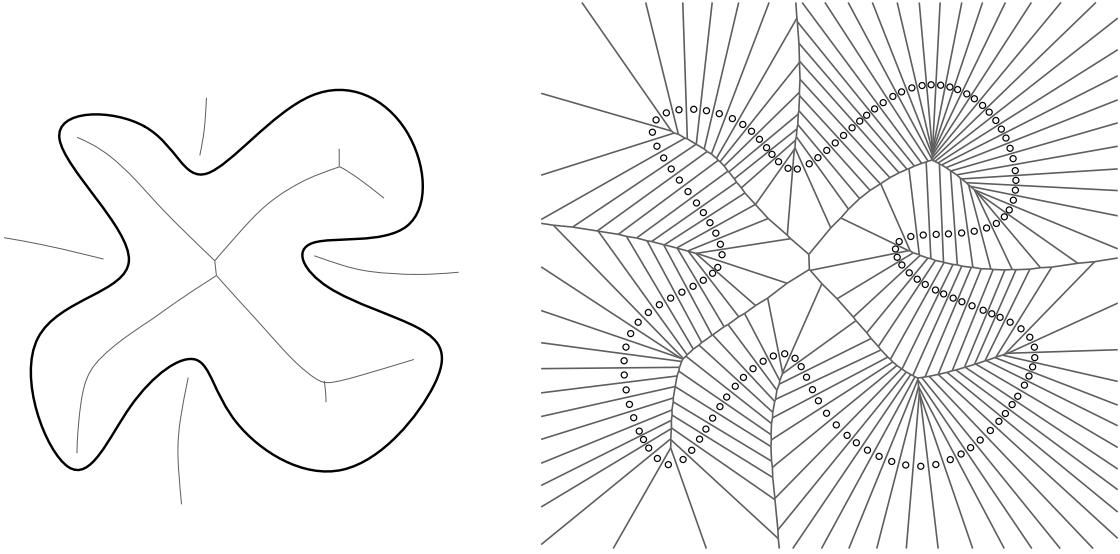}
\caption{Medial axis of a curve $C$ in the plane, and Voronoi diagram
  of a point cloud $P$ sampled on the curve.
%Notice that while $P$ and
%  $C$ are Hausdorff close, their respective medial axes are very
%  different.
}
\end{figure}

\begin{definition}
Let $K$ be a compact subset and $E$ be a measurable subset of
$\RR^d$. We will call \emph{boundary measure} of $K$ with respect to
$E$ the pushforward of the restriction of the Lebesgue measure to $E$
on $K$ by the projection function $\p_K$, or more concisely $\mu_{K,E}
= {\p_K}_\# \restr{\Haus^d}{E}$.
\end{definition}

We will be especially interested in the case where $E$ is of the form
$K^r$, where $K^r$ denotes the $r$-tubular neighborhood of $K$,
i.e. $K^r = \d_K^{-1}([0,r])$.

\begin{example}[Steiner-Minkowski]
If $P$ is a convex solid polyhedron of $\RR^3$, $F$ its set of faces,
$E$ its set of edges and $V$ its set of vertices, then the following
formula holds:
$$
%\begin{aligned}
\mu_{P,P^r} = \restr{\Haus^3}{P}  
%&
+ r \sum_{f \in F}
\restr{\Haus^2}{f} \\
%&
+ r^2 \sum_{e \in E} K(e) \restr{\Haus^1}{e} + 
r^3 \sum_{v\in V} K(v) \delta_v
%\end{aligned}
$$ where $K(e)$ is the angle between the normals of the faces adjacent
to the edge $e$, and $K(v)$ the solid angle formed by the normals of
the faces adjacent to the vertex $v$.

For a general convex polyhedra the measure $\mu_{K,K^r}$ can similarly
be written as a sum of weighted Hausdorff measures supported on the
$i$-skeleton of $K$, whose local density is the local external
dihedral angle.
\end{example}

\begin{example}[Weyl]
Let $M$ be a compact smooth hypersurface of $\RR^d$, and denote by
$\sigma_i(p)$ is the $i$th elementary symmetric polynomial of the
$(d-1)$ principal curvatures of $M$ at a point $p$ in $M$. Then, for
any Borel subset $B$ of $\RR^d$, and $r$ small enough, the
$\mu_{K,K^r}$-measure of $B$ can be written as
$$ \mu_{K,K^r}(B) = \sum_{i=0}^{d-1} \const(i,d) \int_{B \cap M}
\sigma_i(p) \d M(p).$$ This formula can be generalized to submanifolds
of any codimension \cite{weyl1939vt}.
\end{example}

\subsection{Federer curvature measures and reach}
\label{sec:federer}
Following Federer \cite{federer1959cm}, we will call \emph{reach} of a
compact subset $K$ of $\RR^d$ the smallest distance between $K$ and
its medial axis, i.e. $\reach(K) = \min_{x \in \Medial(K)}
\d_K(x)$.

Generalizing Steiner-Minkowski and Weyl tubes formula, Federer proved
that as long as $r$ is smaller than the \emph{reach} of $K$, the
dependence in $r$ of the boundary measure $\mu_{K,K^r}$ is a
polynomial in $r$, of degree bounded by the ambient dimension $d$:

\begin{theorem}[Federer]
For any compact set $K \subseteq \RR^d$ with reach greater than $R$,
there exists $(d+1)$ uniquely defined (signed) measures $\Phi_{K}^0$,
\dots,$\Phi_{K}^d$ supported on $K$ such that for any $r\leq R$,
$$\mu_{K,K^r} = \sum_{i=0}^d \omega_{d-i} \Phi_{K,i} r^i$$ where
$\omega_k$ is the volume of the $k$-dimensional unit sphere.
\end{theorem}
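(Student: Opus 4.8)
The plan is to fix a Borel set $B \subseteq K$ and to show that the real-valued function $r \mapsto \mu_{K,K^r}(B)$ is a polynomial of degree at most $d$ on the interval $[0,R]$. Granting this, the coefficient measures $\Phi_{K,i}$ are obtained by reading off the coefficient of $r^i$ (up to the normalization $\omega_{d-i}$), and uniqueness is then immediate: a polynomial taking prescribed values on the nondegenerate interval $[0,R]$ has uniquely determined coefficients. Concretely, $\Phi_{K,i}(B)$ is forced to equal $\frac{1}{\omega_{d-i}\, i!}\left.\frac{d^i}{dr^i}\right|_{r=0^+}\mu_{K,K^r}(B)$, so the only substantive point is to prove that this expression exists and defines a signed measure in $B$.

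To establish the polynomial dependence I would parametrize the tube by the unit normal bundle of $K$. Since $\reach(K) > R$, for every $r \le R$ the parallel set $K^r$ avoids $\Medial(K)$, so each $x \in K^r \setminus K$ is written uniquely as $x = \p_K(x) + \d_K(x)\, u(x)$ with $u(x) = (x-\p_K(x))/\d_K(x)$ a unit outward normal at $\p_K(x)$. This gives a bijection between $K^r \setminus K$ and a subset of $\mathrm{Nor}(K)\times(0,r]$, where $\mathrm{Nor}(K)$ denotes the set of pairs $(p,u)$ with $u$ a unit normal to $K$ at $p$, and whose inverse is the map $\Psi(p,u,t) = p + t u$.

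The heart of the argument is the Jacobian of $\Psi$. Federer's structural theory shows that $\mathrm{Nor}(K)$ is a $(d-1)$-rectifiable set and that at $\Haus^{d-1}$-almost every $(p,u)$ there exist generalized principal curvatures $\kappa_1(p,u),\dots,\kappa_{d-1}(p,u)$, which the positive-reach hypothesis constrains so that $1 + t\,\kappa_i(p,u) > 0$ for all $t < R$, and for which the tangential Jacobian factors as $J\Psi(p,u,t) = \prod_{i=1}^{d-1}\bigl(1 + t\,\kappa_i(p,u)\bigr)$. Positivity guarantees that $\Psi$ is essentially injective, so the area formula applies. Writing $\sigma_j(p,u)$ for the $j$-th elementary symmetric function of the $\kappa_i$, it yields for $r \le R$
\begin{equation*}
\mu_{K,K^r}(B) = \Haus^d(B \cap K) + \int_{\mathrm{Nor}(K)\cap(B\times S^{d-1})} \int_0^r \sum_{j=0}^{d-1} \sigma_j(p,u)\, t^j \,dt\, d\Haus^{d-1}(p,u).
\end{equation*}
The inner $t$-integral is a polynomial in $r$ of degree $d$ with no constant term, and collecting the coefficient of $r^i$ produces a signed measure on $K$, namely the pushforward to $K$ of $\sigma_{i-1}\,\restr{\Haus^{d-1}}{\mathrm{Nor}(K)}$ up to a constant. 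This exhibits $\mu_{K,K^r}$ as the asserted polynomial, identifies $\Phi_{K,i}$ (for $i\ge 1$) with the integral of $\sigma_{i-1}$ against the normal-bundle measure, and gives $\Phi_{K,0} = \frac{1}{\omega_d}\restr{\Haus^d}{K}$.

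The main obstacle is the regularity underlying the Jacobian computation. For a smooth hypersurface the rectifiability of $\mathrm{Nor}(K)$ and the existence of the $\kappa_i$ are classical differential geometry, but for an arbitrary set of positive reach they rest on Federer's deep analysis of the tangent and normal cones: the $\Haus^{d-1}$-almost-everywhere existence of a second fundamental form and the rectifiability of the unit normal bundle. Verifying that $\Psi$ is Lipschitz, hence that the area formula is legitimately applicable away from a negligible set, together with this structural input, is where essentially all the difficulty lies; once it is in place, the polynomiality in $r$ and the uniqueness of the coefficients are formal consequences.
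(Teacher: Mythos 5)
The paper does not actually prove this theorem: it is quoted from Federer's \emph{Curvature measures} paper and used as a black box, so there is no internal proof to compare yours against. Your outline follows the standard modern route (essentially Z\"ahle's normal-bundle formulation of Federer's result): parametrize $K^r\setminus K$ by $\mathrm{Nor}(K)\times(0,r]$, apply the area formula, expand the Jacobian as a polynomial in $t$, read off the coefficient measures, and get uniqueness from polynomial identity. That architecture is correct, and it differs from Federer's original argument, which avoids the abstract unit normal bundle altogether: he uses that $\partial K^s$ is a $\Class^{1,1}$ hypersurface for $0<s<\reach(K)$, runs the classical tube computation outward from $\partial K^s$ (where Rademacher's theorem supplies a.e.\ principal curvatures of the level set), and lets $s\to 0$. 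The trade-off is that your route needs the rectifiability of $\mathrm{Nor}(K)$ and the a.e.\ existence of generalized principal curvatures as input, which is essentially where the entire difficulty of the theorem is concentrated --- you correctly identify this, but it means the proposal defers rather than supplies the substantive content.

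There is also one concrete error to repair. The Jacobian formula $J\Psi(p,u,t)=\prod_{i}\bigl(1+t\,\kappa_i(p,u)\bigr)$ and the integrands $\sigma_j(p,u)$ are only literally meaningful when all generalized curvatures are finite. For any compact $K$ of positive reach that is not full-dimensional (a point, a segment, a lower-dimensional submanifold) one has $\kappa_i=+\infty$ on a subset of $\mathrm{Nor}(K)$ of positive $\Haus^{d-1}$-measure, and your expression for $\Phi_{K,i}$ becomes an integral of an infinite quantity. The correct tangential Jacobian with respect to $\Haus^{d-1}$ on $\mathrm{Nor}(K)$ is $\prod_i(1+t\kappa_i)/(1+\kappa_i^2)^{1/2}$, with the convention that a factor equals $t$ when $\kappa_i=+\infty$, and the $\sigma_j$ must be reweighted accordingly. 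The sanity check $K=\{p_0\}\subseteq\RR^d$, where $\mu_{K,K^r}$ is the volume of a ball of radius $r$ and all curvatures are infinite, already breaks the formula as written but is handled correctly by the normalized version.
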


These measures are uniquely defined and Federer calls them
\emph{curvature measures} of the compact set $K$.

\subsection{Stability of boundary and curvature measures}
The question of the stability of boundary measures is a particular
case of the more general question of geometric inference. Given a
(discrete) approximation of a compact subset $K$ of $\RR^d$, what
amount of geometry and topology of $K$ is it possible to recover ?
In our case, the question is to bound the Wasserstein distance between
the boundary measures of two compact subsets as a function of their
Hausdorff distance.

Recall that the \emph{Hausdorff distance} between two compact subsets
$K$ and $K'$ is defined by $\dH(K,K') = \nr{\d_K - \d_{K'}}_\infty$.
%
%% We will consider the Wasserstein distance on the set of compactly
%% supported probability measures. 
The \emph{Wasserstein distance} (with exponent one) between two
measures $\mu$ and $\nu$ with finite first moment on $\RR^d$ is
defined by
$ \Wass_1(\mu, \nu) = \min_{X,Y} \Expect[\nr{X-Y}] $ where the
minimum is taken over all the couples of random variables $X,Y$ whose
law are $\mu$ and $\nu$ respectively.

\begin{proposition}
Let $E$ be an open subset and $K, K'$ be two compact subsets
of $\RR^d$. Then,
$$\Wass_1\left(\frac{\mu_{K,E}}{\Haus^d(E)},
\frac{\mu_{K',E}}{\Haus^d(E)}\right)
\leq \frac{1}{\Haus^d(E)} \nr{\p_K - \p_{K'}}_{\LL^1(E)}$$
\label{prop:wass}
\end{proposition}

\begin{proof}
See \cite[Proposition 3.1]{ccsm2009boundary}.
\end{proof}

Hence, in order to obtain a Hausdorff stability result for boundary
measures, one only needs to obtain a bound of the type $ \nr{\p_K -
  \p_{K'}}_{\LL^1(E)} = o(\dH(K,K'))$. The possibility to estimate
Federer's curvature measures from a discrete approximation can also be
deduced from a $\LL^1$ stability result for projection functions (see
\cite[\S4]{ccsm2009boundary}).
% In the next section we will see
%that such bound can be deduced from an analysis of the medial axis of
%one of the two compact sets.

\section{A first non-quantitative stability result}

Intuitively, one expects that the projections $p_K(x)$ and $p_{K'}(x)$
of a point $x$ on two Hausdorff-close compact subsets can differ
dramatically only if $x$ lie close to the medial axis of one of the
compact sets. This makes it reasonable to expect a $\LL^1$ convergence
property of the projections.  However, since the medial axis of a
compact subset of $\RR^d$ is generically dense (see
\cite{zamfirescu2004cut} or \cite[Proposition~I.2]{these}),
translating the above intuition into a proof isn't completely
straightforward.

\subsection{Semi-concavity of $\d_K$ and $\mu$-medial axis}
\label{sec:mumedial}
The semi-concavity of the distance function to a compact set has been
remarked and used in different contexts \cite{fu1985tne,
  petrunin,cannarsa2004semiconcave,lieutier2004aob}.  More precisely,
we will use the fact that for any compact subset $K\subseteq \RR^d$,
the squared distance function to $K$ is $1$-concave. This is
equivalent to the function $v_K: \RR^d \to \RR, x \mapsto \nr{x}^2 -
\d^2_K(x)$ being convex. Thanks to its semiconcavity one is able to
define a notion of generalized gradient for the distance function
$\d_K$, that is defined even at points where $\d_K$ isn't
differentiable.

Given a compact set $K \subseteq \RR^d$, the subdifferential of the
distance function to $K$ at a point $x \in \RR^d$ is by definition the
set of vectors $v \in \RR^d$ such that $\d_K^2(x+h) \leq \d_K^2(x) +
\sca{h}{v} - \lambda \nr{h}^2$ for all $h \in \RR^d$. The
subdifferential of $\d_K$ at a point $x$ is denoted by $\partial_x
\d_K$, it is the convex hull of the set $\left\{  (p - x)/\nr{p - x} \tq
p \in \proj_K(x) \right\}$.

The gradient $\nabla_x \d_K$ of the distance function $\d_K$ at a
point $x \in \RR^d$ is defined as the vector of $\partial_x \d_K$
whose Euclidean norm is the smallest, or equivalently as the
projection of the origin on $\partial_x \d_K$ (see \cite{petrunin} or
\cite{lieutier2004aob}).  Given a point $x \in \RR^d$, denote by
$\gamma_K(x)$ the center and $\MR{r}_K(x)$ the radius of the smallest
ball enclosing the set of orthogonal projections of $m$ on $K$. Then,
\begin{equation}
\begin{split}
\nabla_x \d_K &= \frac{x - \gamma_K(x)}{\d_K(x)}\\
\nr{\nabla_x \d_K} &= \left(1 -
\frac{\MR{r}^2_K(x)}{\d^2_K(x)}\right)^{1/2} = \cos(\theta)
\end{split}
\label{eq:mudef}
\end{equation}
where $\theta$ is the (half) angle of the cone joining $m$ to
$\B(\gamma_K(m), r_K(m))$

\begin{comment}
The medial axis of $K$ is the locus of
non-differentiability for the convex function $v_K$.
A corollary of Alberti, Ambrosio and Cannarsa
theorem\cite{alberti1992scf, alberti1994sss} on the locus of
non-differentiability of a convex function is the $\Class^2$
$\Haus^{(d-1)}$-rectifiability of the medial axis of a compact subset
of $\RR^d$.
\end{comment}

\subsection{$\mu$-Medial axis of a compact set}
The notion of $\mu$-medial axes and $\mu$-critical point of the
distance function to a compact subset $K$ of $\RR^d$ were introduced
by Chazal, Cohen-Steiner and Lieutier in \cite{chazal2006stc}. We
recall the definitions and properties we will need later.

A point $x$ of $\RR^d$ will be called a $\mu$-critical point for the
distance function to $K$(with $\mu \geq 0$), or simply a
$\mu$-critical point of $K$ if for every $h \in \RR^d$,
$$\d_K^2(x+h) \leq \d_K^2(x) + \mu\nr{h} \d_K(x) + \nr{h}^2.$$ The
point $x$ is $\mu$-critical iff the norm of the gradient
$\nr{\nabla_x \d_K}$ is at most $K$.
The \emph{$\mu$-medial axis} $\Medial_\mu(K)$ of a compact set $K
\subseteq \RR^d$ is the set of $\mu$-critical points of the distance
function. Is is easily seen that the medial axis is the union of all
$\mu$-medial axes, with $0\leq\mu<1$: $$\Medial(K) = \bigcup_{0\leq\mu
  < 1} \Medial_\mu(K).$$ Moreover, from the lower semicontinuity of
the map $x \mapsto \nr{\nabla_x \d_K}$, one obtains that for every
$\mu<1$, the $\mu$-medial axis $\Medial_\mu(K)$ of $K$ is a compact
subset of $\RR^d$. The main result of \cite{chazal2006stc} that we
will use is the following quantitative critical point stability
theorem.

\begin{theorem}[Critical point stability theorem]
\label{th:crit-stab}
Let $K,K'$ be two compact sets with $\dH(K,K')\leq \eps$. For any point
$x$ in the $\mu$-medial axis of $K$, there exists a point $y$ in the
$\mu'$-medial axis of $K'$ with 
$\mu' = \mu+2\sqrt{\eps/\d_K(x)}$ and $\nr{x - y} \leq 2 \sqrt{\eps\d_K(x)}$.
\end{theorem}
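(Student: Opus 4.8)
The plan is to reduce the statement to producing, inside the closed ball $\overline{\B}(x,\rho)$ of radius $\rho = 2\sqrt{\eps\,\d_K(x)}$, a point $y$ whose $\d_{K'}$-gradient has norm at most $\mu'$. By the characterization recalled above ($y$ is $\mu'$-critical if and only if $\nr{\nabla_y \d_{K'}}\le\mu'$) this is exactly the assertion $y\in\Medial_{\mu'}(K')$, and the inclusion $y\in\overline{\B}(x,\rho)$ then yields $\nr{x-y}\le\rho = 2\sqrt{\eps\,\d_K(x)}$. Write $R=\d_K(x)$. Two elementary consequences of $\dH(K,K')\le\eps$, namely $\d_{K'}(x)\ge R-\eps$ and $\d_{K'}(z)\le\d_K(z)+\eps$ for every $z$, will be used throughout.

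I would argue by contradiction, assuming $\nr{\nabla_z \d_{K'}}>\mu'$ for every $z\in\overline{\B}(x,\rho)$. The engine is the generalized gradient flow $C$ of $\d_{K'}$ (Lieutier's flow, available because $\d_{K'}^2$ is semiconcave), started at $x$. Parametrized by arc length it moves at unit speed and satisfies $\tfrac{d}{ds}\d_{K'}(C(s))=\nr{\nabla_{C(s)}\d_{K'}}$, the directional derivative in the ascent direction equalling the norm of the gradient. Since by assumption the ball contains no critical point, the flow cannot halt there and must first reach the bounding sphere at a point $y=C(L)$ with $\nr{y-x}=\rho$ and arc length $L\ge\rho$. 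Integrating gives the lower bound $\d_{K'}(y)-\d_{K'}(x)=\int_0^L\nr{\nabla_{C(s)}\d_{K'}}\,ds>\mu' L\ge\mu'\rho$.

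For the matching upper bound I would exploit the $\mu$-criticality of $x$ for $K$. Applying the defining inequality with $h=y-x$, $\nr{h}=\rho$, gives $\d_K^2(y)\le R^2+\mu\rho R+\rho^2$, hence $\d_K(y)\le R+\tfrac{\mu\rho}{2}+\tfrac{\rho^2}{2R}$ by concavity of the square root. Combining with $\d_{K'}(y)\le\d_K(y)+\eps$ and $\d_{K'}(x)\ge R-\eps$ yields $\d_{K'}(y)-\d_{K'}(x)\le\tfrac{\mu\rho}{2}+\tfrac{\rho^2}{2R}+2\eps$. Substituting $\rho=2\sqrt{\eps R}$, the lower bound reads $\mu'\rho=2\mu\sqrt{\eps R}+4\eps$ while the upper bound reads $\mu\sqrt{\eps R}+4\eps$; since the former is at least the latter for every $\mu\ge0$, the two estimates are incompatible. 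This contradiction shows that some $y\in\overline{\B}(x,\rho)$ satisfies $\nr{\nabla_y\d_{K'}}\le\mu'$, as desired.

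The main obstacle is the flow step rather than the arithmetic: I must ensure that the gradient flow of the merely semiconcave function $\d_{K'}$ exists from $x$, that it increases $\d_{K'}$ at instantaneous rate $\nr{\nabla\d_{K'}}$ in arc length, and that under the no-critical-point assumption it escapes the ball in finite length. All three are properties of Lieutier's flow resting on the semiconcavity of $\d_{K'}^2$ and on the identity $\max_{\nr{u}=1}D_u\d_{K'}(z)=\nr{\nabla_z\d_{K'}}$; deriving this identity from the description of $\nabla_z\d_{K'}$ as the projection of the origin onto the convex hull of the unit vectors toward $\proj_{K'}(z)$ is the one genuinely technical point, and it is also what fixes the constants $\rho=2\sqrt{\eps R}$ and $\mu'=\mu+2\sqrt{\eps/R}$.
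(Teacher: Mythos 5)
The paper does not actually prove this theorem: it is quoted from \cite{chazal2006stc} and used as a black box. Your argument is correct, and it is in substance the original proof from that reference: run Lieutier's gradient flow of $\d_{K'}$ from $x$, note that if every point of $\bar\B(x,2\sqrt{\eps\d_K(x)})$ had $\nr{\nabla\d_{K'}}>\mu'$ the flow would exit the ball after arc length at least $2\sqrt{\eps\d_K(x)}$ while increasing $\d_{K'}$ by more than $\mu'\cdot 2\sqrt{\eps\d_K(x)}=2\mu\sqrt{\eps\d_K(x)}+4\eps$, and contradict the upper bound $\mu\sqrt{\eps\d_K(x)}+4\eps$ coming from the $\mu$-criticality of $x$ for $K$ and $\nr{\d_K-\d_{K'}}_\infty\le\eps$; your arithmetic checks out, and the strictness of the lower bound closes the case $\mu=0$. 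The only points left implicit are the ones you flag yourself (existence of the flow, the identity $\tfrac{d}{ds}\d_{K'}(C(s))=\nr{\nabla_{C(s)}\d_{K'}}$ in arc length, and finite-length escape from the ball, all of which are established in Lieutier's work on the semiconcave flow), plus the trivial degenerate case $\d_K(x)\le\eps$, where $\mu'\ge 1$ and one may take $y=x$.
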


\begin{comment}
 They proved in particular that for every $K \subseteq
\RR^d$, and almost every $\mu \in [0,1]$, the map $K' \mapsto
\Medial_\mu(K')$ is continuous at $K$.
\end{comment}

\subsection{A first non-quantitative stability result}
The goal of this paragraph is to prove the following non-quantitative
$\LL^1$ convergence result for projections:

\begin{proposition}
\label{prop:proj-stab-nonquant}
If $(K_n)$ Hausdorff converges to a compact $K \subseteq \RR^d$, then
for any bounded open set $E$, $\lim_{n\rightarrow +\infty}
\nr{\p_{K_n} - \p_{K}}_{\LL^1(E)} = 0$.
\end{proposition}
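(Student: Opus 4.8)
The plan is to upgrade the intuitive statement into genuine pointwise convergence almost everywhere and then conclude by dominated convergence. The key structural fact, which defuses the density issue raised above, is that every medial axis is Lebesgue-negligible: since $v_K(x)=\nr{x}^2-\d_K^2(x)$ is convex it is differentiable outside a set of $\Haus^d$-measure zero, and its non-differentiability locus is exactly $\Medial(K)$ (a point lies off the medial axis precisely when its projection, equivalently the differential of $\d_K^2$, is unique). Hence $\Medial(K)$ and each $\Medial(K_n)$ are $\Haus^d$-null, and so is the countable union $\Medial(K)\cup\bigcup_n\Medial(K_n)$. Removing it from $E$ leaves a full-measure subset $E'\subseteq E$ on which every map $\p_K,\p_{K_1},\p_{K_2},\dots$ is simultaneously defined and single-valued.

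First I would record a closedness property of projections under Hausdorff convergence: for fixed $x$, if $p_n\in\proj_{K_n}(x)$ and $p_n\to p$, then $p\in\proj_K(x)$. Indeed $p_n\in K_n$ together with $\dH(K_n,K)\to 0$ forces $p\in K$ (choose nearby points of $K$ and pass to the limit), while $\nr{x-p_n}=\d_{K_n}(x)$ combined with the uniform estimate $|\d_{K_n}(x)-\d_K(x)|\leq\dH(K_n,K)\to 0$ gives $\nr{x-p}=\d_K(x)$.

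Next, fix $x\in E'$ and set $p_n:=\p_{K_n}(x)$. Because $\dH(K_n,K)$ is bounded, all the sets $K_n$ lie in a fixed ball, so $(p_n)$ is bounded; by the closedness property every accumulation point of $(p_n)$ lies in $\proj_K(x)$. But $x\notin\Medial(K)$ means $\proj_K(x)=\{\p_K(x)\}$ is a singleton, so the only accumulation point is $\p_K(x)$ and the whole sequence converges, $\p_{K_n}(x)\to\p_K(x)$. This yields convergence $\Haus^d$-almost everywhere on $E$. Since $E$ is bounded and all the projected points lie in a fixed ball, the integrand $\nr{\p_{K_n}-\p_K}$ is uniformly bounded, hence integrable over the finite-measure set $E$, and dominated convergence gives $\nr{\p_{K_n}-\p_K}_{\LL^1(E)}\to 0$.

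I expect the only delicate point to be bookkeeping rather than analysis: the density of the medial axes is a purely topological phenomenon and is irrelevant here, since the argument uses only their Lebesgue-negligibility. The one thing that genuinely needs care is that $\p_{K_n}$ is a priori defined merely off its own medial axis $\Medial(K_n)$, which varies with $n$; this is handled by discarding the countable union of all these null sets, which is again null. Note that the quantitative critical-point stability theorem is not needed for this qualitative statement, although it would provide a more geometric, rate-aware version of the same convergence.
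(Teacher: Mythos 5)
Your argument is correct, and it follows a genuinely different route from the paper's. The ingredients you use --- negligibility of each medial axis via a.e.\ differentiability of the convex function $v_K$, the closedness of projections under Hausdorff convergence, the subsequence argument forcing $\p_{K_n}(x)\to\p_K(x)$ at every $x$ outside the null set $\Medial(K)\cup\bigcup_n\Medial(K_n)$, and the uniform bound that makes dominated convergence applicable on the finite-measure set $E$ --- are all sound, and the one delicate point you flag (that $\p_{K_n}$ is only defined off $\Medial(K_n)$) is handled correctly. The paper proceeds differently: it fixes a threshold $L>0$, splits $E$ into the set $\Delta_L(K,K')$ where the two projections differ by at least $L$ and its complement, obtaining $\nr{\p_{K'}-\p_K}_{\LL^1(E)}\leq L\,\Haus^d(E)+\Haus^d(\Delta_L(K,K')\cap E)\,\diam(K\cup K')$, and then invokes the critical point stability theorem (through Lemma \ref{lem:delta-included}) to trap $\Delta_L(K,K')\cap K^R$ inside a tubular neighborhood of a fixed compact $\mu_0$-medial axis, whose measure tends to $\Haus^d(\Medial_{\mu_0}(K))=0$ by compactness and outer regularity. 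Your route is more elementary and self-contained --- it needs neither the $\mu$-medial axis nor Theorem \ref{th:crit-stab} --- but, resting on dominated convergence, it is intrinsically rate-free. The paper's proof is deliberately structured as a qualitative dry run for Theorem \ref{prop:proj-stab}: the same decomposition and the same inclusion of $\Delta_L$ into a neighborhood of $\Medial_\mu(K)$ become quantitative once the covering numbers of the $\mu$-medial axis are controlled by Theorem \ref{th:covering:mumedial}. The two proofs are therefore complementary: yours is the shortest path to the stated limit; the paper's is the one that upgrades to a H\"older estimate.
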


In order to do so, for any $L > 0$, and two compact sets $K$ and $K'$,
we will denote $\Delta_L(K,K')$ the set of points $x$ of $\RR^d
\setminus (K \cup K')$ whose projections on $K$ and $K'$ are at least
at distance $L$, i.e. $\nr{\p_K(x) - \p_{K'}(x)} \geq L$. For
technical reasons, we remove all points of the medial axes of $K$ and
$K'$ from $\Delta_L(K,K')$. Since the Lebesgue measure both medial
axes vanishes, this does not affect the measure of $\Delta_L(K,K')$ .

 A consequence of the critical point
stability theorem is that $\Delta_L(K,K')$ lie close to the
$\mu$-medial axis of $K$ for a certain value of $\mu$ (this Lemma is
similar to \cite[Theorem 3.1]{chazal2008nca}):

\begin{lemma}
\label{lem:delta-included}
Let $L > 0$ and $K,K'$ be two compact sets and $\delta \leq L/2$
denote their Hausdorff distance. Then for any positive radius $R$, one
has
$$ \Delta_L(K,K') \cap K^R \subseteq \Medial_\mu(K)^{2\sqrt{R
    \delta}} $$
with
$$\mu = \left(1 + \left[\frac{L-\delta}{4R}\right]^2\right)^{-1/2} + 4
\sqrt{\frac{\delta}{L}}$$
\end{lemma}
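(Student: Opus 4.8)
The plan is to start from a point $x \in \Delta_L(K,K') \cap K^R$ and manufacture from it two well-separated points of $K$ that are (almost) simultaneous projections of a nearby point, then certify that this nearby point is genuinely $\mu$-critical. First I would set $p = \p_K(x)$ and $p' = \p_{K'}(x)$, so that $\nr{p - p'} \geq L$ and $\d_K(x) \leq R$ by hypothesis. Using $\dH(K,K') = \delta$, I pick $q \in K$ with $\nr{q - p'} \leq \delta$; then $q$ is an approximate projection of $x$ onto $K$, since $\nr{x - q} \leq \d_{K'}(x) + \delta \leq \d_K(x) + 2\delta$, while $\nr{p - q} \geq L - \delta$. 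Thus $x$ carries one genuine projection $p$ and one $2\delta$-approximate projection $q$ on $K$, at mutual distance at least $L-\delta$.

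The mechanism producing the main term $\mu_0 := (1 + [(L-\delta)/(4R)]^2)^{-1/2}$ is the gradient formula $(\ref{eq:mudef})$: at any point $z$ whose projection set $\proj_K(z)$ contains two points at distance $\geq L-\delta$, the smallest enclosing ball has radius $r_K(z) \geq (L-\delta)/2$, hence $\nr{\nabla_z \d_K} = (1 - r_K^2(z)/\d_K^2(z))^{1/2} \leq (1 - (L-\delta)^2/(4\,\d_K^2(z)))^{1/2}$. An elementary inequality shows this is at most $\mu_0$ as soon as $\d_K(z) \leq 2R$, which holds for any $z$ within about $R$ of $x$. The key observation is that only a small angular spread of the projection directions is required, while $p$ and $q$ are already separated by nearly twice that amount; this safety margin is exactly what will absorb the fact that $q$ is only approximate.

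It then remains to produce a genuine $\mu$-critical point $z$ within $2\sqrt{R\delta}$ of $x$. I would localise on the bisector hyperplane $H$ of $[p,q]$: a short computation gives $\nr{x - y_0} \leq 2\delta(\d_K(x)+\delta)/(L-\delta) = O(\delta)$ for the orthogonal projection $y_0$ of $x$ onto $H$, and then $\nr{y_0 - p} - \d_K(y_0) \leq 2\nr{x-y_0} = O(\delta)$, so that $p$ and $q$ are simultaneous approximate projections of $y_0$. To upgrade this approximate coincidence into an exact critical point while controlling both the displacement and the loss in $\mu$, the natural tool is the critical point stability theorem: its two conclusions match the two error terms exactly, the additive $4\sqrt{\delta/L} = 2\sqrt{\delta/(L/4)}$ coming from Hausdorff parameter $\delta$ at distance-scale $L/4$, and the displacement $2\sqrt{R\delta} = 2\sqrt{\delta\cdot R}$ coming from the same parameter at distance-scale $R$. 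The auxiliary ingredient that feeds it is that the gap $z \mapsto \nr{z-p}^2 - \d_K^2(z)$ is \emph{convex} on $H$ (because $\nr{\cdot}^2 - \d_K^2$ is convex) and small at $y_0$, which both locates the exact coincidence set and quantifies how far one must move to reach it.

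The hard part will be precisely this last step. Since $x$ has a unique genuine projection, $\nr{\nabla_x \d_K} = 1$: the point $x$ is \emph{not} an approximate critical point, and the far point $q$ lowers no gradient at $x$, so a genuine, non-infinitesimal displacement is unavoidable. The real difficulty is to certify that the point one lands on carries two far-apart \emph{genuine} projections --- ruling out that the exact projection set collapses near $p$ while $q$ remains merely approximate --- all while keeping the displacement below $2\sqrt{R\delta}$. This interplay between the possible clustering of the true projections and the only-approximate partner $q$ is exactly what the semiconcavity of $\d_K^2$ together with the quantitative critical point stability theorem is designed to control, and getting the constants to line up with $\mu_0 + 4\sqrt{\delta/L}$ is where the bookkeeping will concentrate.
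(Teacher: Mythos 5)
You have the right raw materials (an auxiliary point $q$ near $p'=\p_{K'}(x)$, the enclosing-ball/angle bound that produces the main term $\mu_0=(1+[(L-\delta)/(4R)]^2)^{-1/2}$, and the critical point stability theorem as the source of the two error terms $4\sqrt{\delta/L}$ and $2\sqrt{R\delta}$), but the central step is missing and the substitute you propose does not close it. Theorem \ref{th:crit-stab} takes as input a point that is \emph{exactly} $\mu_0$-critical for some compact set Hausdorff-close to $K$; you never exhibit such a pair. Instead you keep $K$ fixed and try to move $x$ to a genuine critical point of $K$ by locating a zero of the gap $g(z)=\nr{z-p}^2-\d_K^2(z)$ on the bisector hyperplane $H$ of $[p,q]$. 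This has two holes: the restriction of $g$ to $H$ is a nonnegative convex function that is small at $y_0$, but such a function need not vanish anywhere near $y_0$ (or at all on $H$); and the set $\{g=0\}\cap H$ --- which is exactly the set of points admitting both $p$ and your $q\in K$ as \emph{genuine} projections --- can be empty, e.g.\ when some other point of $K$ shadows $q$ so that $q$ is never an exact projection. This is precisely the ``hard part'' you flag in your last paragraph, and it is not resolved; note also that certifying two far-apart genuine projections at the landed point is more than is needed, since membership in $\Medial_\mu(K)$ only requires $\nr{\nabla\d_K}\leq\mu$.

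The paper's proof sidesteps all of this by modifying the compact set rather than the point: it takes $q$ to be the radial projection of $p'$ onto the sphere $\Sph(x,\d_K(x))$ --- so $q\notin K$ in general, but $\nr{x-q}=\d_K(x)$ exactly --- and sets $K_0=K\cup\{q\}$. Then $x$ is an \emph{exact} critical point of $K_0$ with two projections $p,q$ at mutual distance at least $L-\delta$, hence $x\in\Medial_{\mu_0}(K_0)$ by the angle computation you already have, and $\dH(K,K_0)\leq 2\delta$ since $\nr{q-p'}\leq\delta$ and $p'$ is $\delta$-close to $K$. Applying Theorem \ref{th:crit-stab} to the pair $(K_0,K)$ then directly yields a point $y\in\Medial_\mu(K)$ with $\nr{x-y}\leq 2\sqrt{R\delta}$ and $\mu=\mu_0+4\sqrt{\delta/L}$, with nothing left to certify. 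To repair your write-up, replace your $q\in K$ by this sphere point, form $K_0=K\cup\{q\}$, and delete the bisector-hyperplane step entirely.
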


\begin{proof}
Let $x$ be a point in $\Delta_L(K,K')$ with $\d_K(x) \leq R$, and
denote by $p$ and $p'$ its projections on $K$ and $K'$
respectively. By assumption, $\nr{p - p'}$ is at least $L$. We let $q$
be the projection of $p'$ on the sphere $\Sph(x, \d_K(x))$, and let
$K_0$ be the union of $K$ and $q$. By hypothesis on the Hausdorff
distance between $K$ and $K'$, the distance between $p'$ and $q$ is at
most $\delta$. Hence, $\dH(K, K_0)$ is at most $2\delta$.

By construction, the point $x$ has two projections on $K_0$, and must
belong to the $\mu_0$-medial axis of $K_0$ for some value of
$\mu$. Letting $m$ be the midpoint of the segment $[p,q]$, we are able
to upper bound the value of $\mu_0$:
$$\mu^2_0 \leq \nr{\nabla_x \d_{K_0}}^2 \leq \cos\left(\frac{1}{2}
\angle(p - x, q - x)\right)^2 = \nr{x - m}^2/\nr{x - p}^2$$
Since $p,
q$ belong to the sphere $\B(x, \d_K(x))$, one has $(p-q) \perp (m -
x)$ and $\nr{x - p}^2 = \nr{x - m}^2 + \frac{1}{4} \nr{p - q}^2$. This gives
$$ \mu_0 \leq \left(1 + \frac{1}{4}\frac{\nr{p - q}^2}{\nr{x -
    m}^2}\right)^{-1/2} \leq \left[1+ \left(\frac{L - \delta}{2
    R}\right)^2\right]^{-1/2}$$
To get the second inequality we used $\nr{x-m} \leq R$ and $\nr{p-q}
\geq L - \delta$.

In order to conclude, one only need to apply the critical point
stability theorem (Theorem \ref{th:crit-stab}) to the compact sets $K$
and $K_0$ with $\d_H(K,K_0) \leq 2\delta$. Since $x$ is in the
$\mu_0$-medial axis of $K_0$, there should exist a point $y$ in
$\Medial_\mu(K)$ with $\nr{x-y}\leq2 \sqrt{R \delta}$ and $\mu = \mu_0
+ 4\sqrt{\delta/L}$.
\end{proof}

\begin{proof}[Proof of Proposition \ref{prop:proj-stab-nonquant}]
Fix $L > 0$, and suppose $K$ and $K'$ are given. One can
decompose the set $E$ between the set of points where the projections
differ by at least $L$ (\ie $\Delta_{L}(K,K') \cap E$) and the
remaining points. This gives the bound:
$$ \nr{\p_{K'} - \p_{K}}_{\LL^1(E)} \leq L \Haus^d(E) +
\Haus^d(\Delta_{L}(K,K') \cap E) \diam(K \cup K') $$

Now, take $R = \sup_E \nr{\d_K}$, so that $E$ is contained in the
tubular neighborhood $K^R$, and fix $L = \eps/\Haus^d(E)$. Then, for
$\delta = \dH(K,K')$ small enough (e.g. less than some $\delta_0$),
the value of $\mu$ given in Lemma \ref{lem:delta-included} is smaller
than one. Denote by $\mu_0$ the value given by the lemma for
$\delta_0$. Then
\begin{equation}
\nr{\p_{K'} - \p_{K}}_{\LL^1(E)} \leq \eps +
\Haus^d(\Medial_{\mu_0}(K)^{2 \sqrt{R\delta}}) \diam(K \cup K')
\label{eq:maj-proj}
\end{equation}
Being compact, $\Medial_{\mu_0}(K)$ is the intersection of its tubular
neighborhoods. Combining this with the outer-regularity of the
Lebesgue measure gives: $$\lim_{\delta \rightarrow 0}
\Haus^d(\Medial_{\mu_0}(K)^{2 \sqrt{R\delta}}) =
\Haus^d(\Medial_{\mu_0}(K)) = 0.$$
Putting this limit in equation
\eqref{eq:maj-proj} concludes the proof.
\end{proof}

\section{Size and volume of the $\mu$-medial axis}

From the proof of Proposition \ref{prop:proj-stab-nonquant}, one can
see that a way to get a quantitative stability of the projection
functions is to control the volume of tubular neighborhoods of some
part of the $\mu$-medial axis. Recall that the \emph{$\eps$-covering
  number} of a subset $X \subseteq \RR^d$ is the minimum number $N$ of
points $x_1, \hdots, x_N$ such that $X$ is contained in the union of
balls $\cup_{i=1}^N \bar{\B}(x_i, \eps)$. The following inequality is then 
straightforward:
\begin{equation}
\Haus^d(X^\eps) \leq \Haus(\B(0,\eps)) \LebNum(X,\eps)
\end{equation}
Our goal in this section is to obtain a bound on the covering numbers
of the considered part of the $\mu$-medial axis (see Theorem
\ref{th:covering:mumedial}) that will allow to control the growth of
the volume of its tubular neighborhoods.

% The medial axis has long been an object of study of Riemannian geometers,
% under the name \emph{cut locus}.  paper of C. Mantegazza and
% A.C Mennucci (\cite{mantegazza2002hje}) show that under some very weak
% hypothesis the set of singular points of the distance function is
% $\Class^2$-rectifiable. 

Because of its compactness, one could expect that the $\mu$-medial
axis of a well-behaved compact set will have finite
$\Haus^{d-1}$-measure. This is not the case in general: if one
considers a ``comb'', \ie an infinite union of parallel segments of
fixed length in $\RR^2$, such as $\MC{C} = \cup_{i\in \NN^*}
[0,1]\times\{2^{-i}\} \subseteq \RR^2$ (see Figure \ref{fig:comb}),
the set of critical points of the distance fonction to $\MC{C}$
contains an imbricate comb. Hence $\Haus^{d-1}(\Medial_\mu(\MC{C}))$
is infinite for any $\mu > 0$.

However, for any positive $\eps$, the set of points of the
$\mu$-medial axis of $\MC{C}$ that are $\eps$-away from $\MC{C}$ (that
is $\Medial_\mu(\MC{C}) \cap \RR^d\setminus \MC{C}^\eps$) only contains a
finite union of segments, and has finite $\Haus^{d-1}$-measure.  The
goal of this section is to prove (quantitatively) that this remains
true for any compact set. Precisely, we have:

\begin{figure}
\centering\includegraphics[height=3cm]{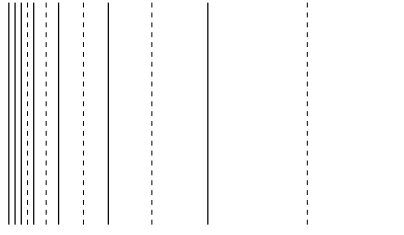}\\
\caption{The ``comb'' and a part of its medial axis (dotted)}
\label{fig:comb}
\end{figure}

\begin{theorem}
\label{th:covering:mumedial}
For any compact set $K\subseteq \RR^d$, $\eps \leq \diam(K)$, and
$\eta$ small enough,
\begin{equation*}
\LebNum\left(\Medial_\mu(K) \cap
(\RR^d\setminus K^{\eps}), \eta\right) \leq
 \LebNum(\partial K, \eps/2) \BigO\left(\left[\frac{\diam(K)}{\eta\sqrt{1-\mu}}\right]^{d-1}\right) 
\end{equation*}
In particular, one can bound the $(d-1)$-volume of the $\mu$-medial axis
\begin{equation*}
\Haus^{d-1}\left(\Medial_\mu(K) \cap (\RR^d\setminus K^{ \eps})\right)
\leq \LebNum(\partial K, \eps/2)
\BigO\left(\left[\frac{\diam(K)}{\sqrt{1-\mu}}\right]^{d-1}\right)
\end{equation*}
\end{theorem}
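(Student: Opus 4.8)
The plan is to realise the relevant part of the medial axis as the Lipschitz image of a piece of the level set $S_\eps := \d_K^{-1}(\eps)$, and then transport an economical covering of $S_\eps$ through this map. The first observation, which already accounts for the shape of the bound, is an a priori localisation of $\mu$-critical points: writing $\MR{r}_K(x)$ for the radius of the smallest ball enclosing $\proj_K(x)$, the identity \eqref{eq:mudef} gives $\MR{r}_K(x) = \d_K(x)\sqrt{1 - \nr{\nabla_x \d_K}^2} \geq \d_K(x)\sqrt{1-\mu^2}$ whenever $x \in \Medial_\mu(K)$. Since $\proj_K(x) \subseteq K$ forces $\MR{r}_K(x) \leq \diam(K)$, every $\mu$-critical point obeys $\d_K(x) \leq \diam(K)/\sqrt{1-\mu^2}$. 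Thus $\Medial_\mu(K)\setminus K^\eps$ lies in the shell $\eps \leq \d_K \leq \diam(K)/\sqrt{1-\mu^2}$, which is where both the $\diam(K)$ factor and the degeneration in $\sqrt{1-\mu}$ originate.

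Next I would set up the parametrisation. Given $x \in \Medial_\mu(K)$ with $\d_K(x) \geq \eps$ and a projection $p \in \proj_K(x)$, the open segment $(p,x)$ avoids the medial axis, so the point $y := p + \eps\,(x-p)/\d_K(x)$ lies on $S_\eps$, admits $p$ as its unique projection, and satisfies $\nabla_y \d_K = (x-p)/\d_K(x)$. Conversely $x = y + \tau_K(y)\,\nabla_y\d_K$, where $\tau_K(y) := \d_K(x) - \eps$ is the \emph{normal distance} from $y$ to the medial axis. This exhibits $\Medial_\mu(K)\setminus K^\eps$ as the image of a map $\Phi$ defined on a subset $\mathcal D \subseteq S_\eps$. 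The engine of the proof is the Lipschitz regularity of the normal distance, namely that $\Phi$ is Lipschitz on $\mathcal D$ with constant $\Lambda = \BigO\!\left(\diam(K)/(\eps\sqrt{1-\mu})\right)$. Granting this, the theorem follows by counting: cover $\partial K$ by $N = \LebNum(\partial K, \eps/2)$ balls of radius $\eps/2$; the portion of $S_\eps$ lying over each such ball has diameter $\BigO(\eps)$, hence is covered by $\BigO\!\left((\eps\Lambda/\eta)^{d-1}\right)$ balls of radius $\eta/\Lambda$; applying the $\Lambda$-Lipschitz map $\Phi$ turns these into balls of radius $\eta$ covering the corresponding piece of the medial axis. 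Since $\eps\Lambda = \BigO\!\left(\diam(K)/\sqrt{1-\mu}\right)$, summing over the $N$ boundary balls gives the announced estimate $\LebNum(\Medial_\mu(K)\setminus K^\eps,\eta) \leq N\cdot\BigO\!\left((\diam(K)/(\eta\sqrt{1-\mu}))^{d-1}\right)$.

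The genuine obstacle is the Lipschitz estimate for $\Phi$, and this is where the $\mu$-critical hypothesis is indispensable. On the fixed level set $S_\eps$ the data $y \mapsto (\p_K(y),\nabla_y\d_K)$ is Lipschitz with constants depending only on $\eps$, a routine consequence of the $1$-semiconcavity of $\d_K$ (convexity of $v_K$). Extending a point from distance $\eps$ out to distance $\d_K(x) \leq \diam(K)/\sqrt{1-\mu^2}$ introduces a lever-arm amplification of order $\d_K(x)/\eps$, which is harmless. The delicate quantity is the variation of $\tau_K$ itself: a priori the foot point $x$ could move rapidly as $y$ varies. Here the bound $\MR{r}_K(x) \geq \eps\sqrt{1-\mu^2}$ is what saves the day, for it means that $x$ sees a whole ball of radius $\geq \eps\sqrt{1-\mu^2}$ of mutually distant projections, and the semiconcavity inequality $\d_K^2(x+h) \leq \d_K^2(x) + 2\min_{p\in\proj_K(x)}\sca{x-p}{h} + \nr{h}^2$ then forces $\d_K$ to decrease at a definite rate in every direction transverse to the ray. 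Quantifying how this ambiguity radius pins down $x$, and hence bounds $|\tau_K(y)-\tau_K(y')|$ by $\BigO(1/\sqrt{1-\mu})\,\nr{y-y'}$, is the crux; the blow-up of the constant as $\mu \to 1$ is unavoidable, mirroring the infiniteness of the full medial axis.

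Finally, the $(d-1)$-volume bound is a formal consequence. Up to a dimensional constant one has $\Haus^{d-1}(X) \leq \liminf_{\eta\to0}\, \Haus^d(\B(0,\eta))\,\eta^{-(d-1)}\,\LebNum(X,\eta)$, and substituting the covering estimate above makes the right-hand side equal to $\LebNum(\partial K,\eps/2)\,\BigO\!\left((\diam(K)/\sqrt{1-\mu})^{d-1}\right)$ independently of $\eta$, which yields the second inequality.
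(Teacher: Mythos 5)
Your architecture coincides with the paper's: localise $\Medial_\mu(K)\setminus K^\eps$ in a shell of outer radius $\BigO(\diam(K)/\sqrt{1-\mu^2})$ (this is Lemma~\ref{lemma:dist-K-mumedial}), realise it as the image of a subset of the level set $\partial K^\eps$ under the normal projection $\ell$ (Lemma~\ref{lemma:psi}), prove that $\ell$ is locally Lipschitz there with constant $\BigO(\diam(K)/(\eps\sqrt{1-\mu}))$, and transport coverings. The problem is that the step you yourself label ``the crux'' --- the local Lipschitz bound for $\tau_K$ on the relevant subset of the level set --- is the actual content of the theorem, and your proposal motivates it without proving it. The heuristic you offer (a large ambiguity radius $\MR{r}_K(\ell(y))\geq\eps\sqrt{1-\mu^2}$ plus semiconcavity ``pins down'' the foot point) does not translate readily into an inequality: the ambiguity radius is a property of the single endpoint $\ell(y)$, and says nothing a priori about the rays issued from nearby points $y'$, whose endpoints $\ell(y')$ need not be $\mu$-critical at all. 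The paper's mechanism is a genuine additional idea: by Jung's theorem (Lemma~\ref{lemma:opposite-point}) the point $m=\ell(x)$ admits a \emph{second} projection $y$ on $\partial K^\eps$ with $\cos\frac{1}{2}\angle(x-m,y-m)\leq\sqrt{(1+\mu^2)/2}$; freezing $y$, one observes that for any nearby $x'$ the quantity $\tau(x')$ is bounded \emph{above} by the time at which the ray $x'+t\nabla_{x'}\d_K$ crosses the bisector hyperplane of $x'$ and $y$ (past that hyperplane, $\p_K(x')$ can no longer be the closest point). That crossing time is an explicit rational function of $(x',\nabla_{x'}\d_K)$ whose variation is controlled by the transversality constant $\alpha=\sqrt{(1-\mu^2)/2}$ (Lemma~\ref{lemma:taulip:computation}); this one-sided barrier, applied in both directions, is what yields $\Lip_{\delta_0}[\restr{\tau}{S_\mu^\eps}]$. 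Without some such device your estimate $\abs{\tau(y)-\tau(y')}\leq\BigO(1/\sqrt{1-\mu})\nr{y-y'}$ is unsupported.

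Three further points. First, your claim that $y\mapsto(\p_K(y),\nabla_y\d_K)$ is Lipschitz on the whole level set $S_\eps$ with constants depending only on $\eps$ is false: the gradient is not even continuous at points of $S_\eps$ lying on the medial axis, and it oscillates arbitrarily fast nearby. The paper's Lemma~\ref{lemma:grad:lipschitz} gives the $3/\eps$ bound only on the subset where $\tau\geq\eps$; your domain $\mathcal{D}$ contains points with $\tau$ arbitrarily close to $0$ (those lying below $m$ with $\d_K(m)$ barely above $\eps$), so it must be cut down --- this is why the paper covers $\Medial_\mu(K)\setminus K^{2\eps}$ by $S_\mu^\eps$ and then rescales $\eps$. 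Second, in the counting step, ``diameter $\BigO(\eps)$, hence covered by $\BigO((\eps\Lambda/\eta)^{d-1})$ balls'' is a non sequitur: a set of diameter $\eps$ in $\RR^d$ generically requires the $d$-th power. You need the specific estimate $\LebNum(\partial K^r,\rho)\leq\LebNum(\partial K,r)\,\LebNum(\Sph^{d-1},\rho/2r)$ of equation~\eqref{eq:cov-offset}, which encodes that the level set sits locally inside a short image of a sphere of radius $r$; the exponent $d-1$ in the theorem depends on it. Third, the closing formula $\Haus^{d-1}(X)\leq\liminf_{\eta\to0}\Haus^d(\B(0,\eta))\,\eta^{-(d-1)}\LebNum(X,\eta)$ is dimensionally wrong (the prefactor is $\sim\eta$); the correct comparison is $\Haus^{d-1}(X)\leq C_d\liminf_{\eta\to0}\eta^{d-1}\LebNum(X,\eta)$, which does give the second inequality.
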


\begin{remarkst}[Sharpness of the bound] Let $x, y$ be two points at distance
  $D$ in $\RR^d$ and $K = \{x, y\}$. Then, $\Medial(K)$ is simply the
  medial hyperplane between $x$ and $y$. A point $m$ in $\Medial(K)$
  belongs to $\Medial_\mu(K)$ iff the cosine of the angle $\theta =
  \frac{1}{2}\angle(x-m,y-m)$ is at most $\mu$.
\begin{equation*}
 \cos^2(\theta) = 1 - \frac{\nr{x - y}^2}{\d_K^2(m)} = 1 -
\frac{\diam(K)^2}{4 \d_K^2(m)}
\end{equation*}
 Hence, $\cos(\theta) \geq \mu$ iff
$\d_K(m) \leq \frac{1}{2} \diam(K)/\sqrt{1 - \mu^2}$. Let $z$ denote the
midpoint between $x$ and $y$; then $\d_K(m)^2 = \nr{z - m}^2 +
\diam(K)^2/4$. Then, $\Medial_\mu(K)$ is simply the intersection of
the ball centered at $z$ and of radius $\diam(K) \sqrt{\mu^2/(1-\mu^2)}$
with the medial hyperplane. Hence,
\begin{equation*}
\Haus^{d-1}(\Medial_\mu(K)) =
\Omega\left(\left[\frac{\diam(K)\mu^2}{\sqrt{1-\mu^2}}\right]^{d-1}\right)
\end{equation*}
This shows that the behaviour in $\diam(K)$ and $\mu$ of the theorem
is sharp as $\mu$ converges to one.
\end{remarkst}

\subsection{Outline of the proof}
In order to obtain the bound on the covering numbers of the the
$2\eps$-away $\mu$-medial axis $\Medial_\mu(K) \cap (\RR^d\setminus
K^{2\eps})$ given in Theorem \ref{th:covering:mumedial}, we prove that
this set can be written as the image of a part of the level set
$\partial K^\eps$ under the so-called \emph{normal projection on the
  medial axis} $\ell: \RR^d\setminus K \to \MedialClos(K)$.

The main difficulty is to obtain a Lipschitz regularity statement for
the restriction of the map $\ell$ to a suitable subset of $\partial
K^\eps$. There is no such statement for the whole surface $\partial
K^\eps$ in general. However, we are able to introduce a subset
$S_\mu^\eps \subseteq \partial K^\eps$ whose image under $\ell$ cover
the $\eps$-away $\mu$-medial axis, and such that the restriction of
$\ell$ to $S_\mu^\eps$ is Lipschitz. This is enough to conclude.

\subsection{Covering numbers of the  $\mu$-medial axis}
\label{ssec:volumemumedial}

We now proceed to the proof of Theorem \ref{th:covering:mumedial}. 

\begin{definition}
\label{def:tau}
For any point $x \in \RR^d$, we define the \emph{normal distance of
  $x$ to the medial axis} as $\tau_K(x) := \inf \{ t \geq 0\tq x + t
\nabla_x \d_K \in \Medial(K) \}$. We will set $\tau_K(x)$ to zero at
any point in $K$ or in the medial axis $\Medial(K)$.

For any time $t$ smaller than $\tau(x)$, we denote by $\Psi_K^t(x)$
the point $\Psi_K^t(x) = x + t \nabla_x \d_K$.  Finally, for any $x
\not\in K$, we let $\ell_K(x)$ be the first intersection of the
half-ray starting at $x$ with direction $\nabla_x \d_K$ with the
medial axis. More precisely, we define $\ell_K(x) =
\Psi_K^{\tau(x)}(x) \in \MedialClos(K)$.
\end{definition}

\begin{lemma}
Let $m$ be a point of the medial axis $\Medial(K)$ with $\d(x,K) >
\eps$, and $x$ be a projection of $m$ on $\partial K^\eps$. Then
$\ell(x) = m$. %gradient $\nabla_K$ must be along  $[x,m]$.
\label{lemma:psi}
\end{lemma}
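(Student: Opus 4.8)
The plan is to show that the oriented ray defining $\ell(x)$ coincides with the ray from $x$ straight through $m$, and that $m$ is the first point of this ray to meet the medial axis. The starting observation is a computation of the distance from $m$ to the level set $\partial K^\eps$, under the hypothesis $\d_K(m) > \eps$. Pick any projection $p \in \proj_K(m)$; since $\d_K(m) > \eps$, the point $x_0 := p + \eps\,(m-p)/\nr{m-p}$ lies on the open segment $(p,m)$ and satisfies $\d_K(x_0) = \eps$, so $x_0 \in \partial K^\eps$ and $\nr{m - x_0} = \d_K(m) - \eps$. Conversely, the $1$-Lipschitz property of $\d_K$ yields $\nr{m - y} \geq \d_K(m) - \d_K(y) = \d_K(m) - \eps$ for every $y \in \partial K^\eps$. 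Hence $\d(m, \partial K^\eps) = \d_K(m) - \eps$, and every projection $x$ of $m$ on $\partial K^\eps$ realizes $\nr{m-x} = \d_K(m) - \eps$.

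First I would exploit the resulting equality case of the triangle inequality. Choosing $q \in \proj_K(x)$, one has $\d_K(m) \leq \nr{m - q} \leq \nr{m - x} + \nr{x - q} = (\d_K(m) - \eps) + \eps = \d_K(m)$, so all of these inequalities are equalities. This forces $q$ to be a projection of $m$ on $K$ and forces $x$ to lie on the segment $[q, m]$, at distance $\eps$ from $q$; in particular $x$ is strictly between $q$ and $m$.

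Next I would pin down the gradient direction. Both remaining facts rest on the classical property that any point strictly between $m$ and one of its nearest points $q$ has $q$ as its \emph{unique} projection on $K$: a competing projection would, through the triangle inequality, yield a point of $K$ strictly closer to $m$ than $\d_K(m)$, a contradiction. Applied to $x$ itself this gives $x \notin \Medial(K)$ with $\proj_K(x) = \{q\}$, so $\gamma_K(x) = q$ and, by the gradient formula \eqref{eq:mudef}, $\nabla_x \d_K = (x - q)/\nr{x - q} = (m - q)/\nr{m-q}$ is the unit vector pointing from $q$ through $x$ toward $m$. Consequently $m = x + (\d_K(m)-\eps)\,\nabla_x \d_K$ sits on the ray $t \mapsto \Psi_K^t(x) = x + t\,\nabla_x \d_K$.

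Finally I would argue that this intersection is the first one. Applying the same uniqueness property to each interior point of $(x,m) \subseteq (q,m)$ shows that none of them lies in $\Medial(K)$, whereas $m$ does. Therefore $\tau(x) = \nr{m - x} = \d_K(m) - \eps$ and $\ell(x) = \Psi_K^{\tau(x)}(x) = m$. The step I expect to demand the most care is this uniqueness-of-projection argument along the open segment, since it must simultaneously rule out competing projections at $x$ (to determine the gradient direction unambiguously) and at every interior point of $(x,m)$ (to guarantee that $m$, rather than some earlier critical point, is the first meeting with the medial axis).
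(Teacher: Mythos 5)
Your proof is correct and follows essentially the same route as the paper's: show that a projection of $x$ on $K$ is also a projection of $m$ (forcing $q$, $x$, $m$ to be aligned), then use the uniqueness of projections along the open segment from $q$ to $m$ to identify $\nabla_x \d_K$ and to conclude $\tau(x) = \nr{m-x}$. Your version merely spells out the triangle-inequality equality cases that the paper leaves implicit.
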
 

\begin{proof}
By definition of $K^\eps$, $d(m,K) = d(m,K^\eps) + \eps$, so that the
projection $p$ of $x$ on $K$ must also be a projection of $m$ on
$K$. Hence, $m, x$ and $p$ must be aligned. Since the open ball  $B(m,
d(m,p))$ does not intersect  $K$, for any point $y \in  ]p,m[$ the
ball $B(y, d(y,p))$ intersects $K$ only at $p$. In particular, by
definition of the gradient, $\nabla_x \d_K$ must be the unit vector
directing $]p,m[$, \ie $\nabla_x \d_K = (m-x)/d(m,x)$. Moreover, since
$[x,p[$ is contained in the complement of the medial axis,
$\tau(x)$ must be equal to $\d(x,m)$. Finally one gets $\Psi^{\tau(x)}(x)
= x + \d(x,m) \nabla_x \d_K = m$. 
\end{proof}

This statement means in particular that $2\eps$-away medial axis, that
is $\Medial(K) \cap (\RR^d\setminus K^\eps)$, is contained in the
image of the piece of hypersurface $\{x \in \partial K^\eps\tq
\tau_K(x) \geq \eps\}$ by the map $\ell$.

Recall that the radius of a set $K \subseteq \RR^d$ is the radius of
the smallest ball enclosing $K$, while the diameter of $K$ is the
maximum distance between two points in $K$. The following inequality
between the radius and the diameter is known as Jung's theorem
\cite{jung}:
$\radius(K) \sqrt{2 (1 +
  1/d)}  \leq \diam(K)$.

\begin{lemma}
\label{lemma:opposite-point}
For any point $m$ in the $\mu$-medial axis $\Medial_\mu(K)$, there
exists two projections $x,y \in \proj_K(m)$ of $m$ on $K$ such that
the cosine of the angle $\frac{1}{2}\angle(x-m,y-m)$ is smaller than
$\left(\frac{1+\mu^2}{2}\right)^{1/2}$.
\end{lemma}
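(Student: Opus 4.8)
The plan is to read off the geometry of $\mu$-criticality from \eqref{eq:mudef} and then apply Jung's theorem to the set of projections $\proj_K(m)$. First, recall that $m \in \Medial_\mu(K)$ means precisely $\nr{\nabla_m \d_K} \leq \mu$. By \eqref{eq:mudef}, $\nr{\nabla_m \d_K} = \left(1 - r_K(m)^2/\d_K(m)^2\right)^{1/2}$, where $r_K(m)$ is the radius of the smallest ball enclosing $\proj_K(m)$. The inequality $\nr{\nabla_m \d_K} \leq \mu$ thus rearranges into $r_K(m)/\d_K(m) \geq \sqrt{1 - \mu^2}$; informally, the projections of a $\mu$-critical point cannot all be clustered inside a ball whose radius is small compared to $\d_K(m)$.

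Next I would apply Jung's theorem to the compact set $P = \proj_K(m)$, whose circumradius is by definition exactly $r_K(m)$. Jung's inequality $\radius(P)\sqrt{2(1+1/d)} \leq \diam(P)$ produces a pair $x, y \in \proj_K(m)$ realizing the diameter of $P$ with $\nr{x - y} \geq r_K(m)\sqrt{2(1 + 1/d)} \geq r_K(m)\sqrt{2}$. Since both $x$ and $y$ lie on the sphere $\Sph(m, \d_K(m))$, the chord length is tied to the apex angle by the elementary identity $\nr{x - y} = 2\,\d_K(m)\sin\!\left(\tfrac12\angle(x - m, y - m)\right)$.

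Combining the two steps and writing $\phi = \angle(x - m, y - m)$, I obtain $\sin(\phi/2) = \nr{x-y}/(2\,\d_K(m)) \geq \tfrac{1}{\sqrt{2}}\, r_K(m)/\d_K(m) \geq \sqrt{(1-\mu^2)/2}$. Squaring and complementing gives $\cos^2(\phi/2) = 1 - \sin^2(\phi/2) \leq \tfrac12 + \tfrac{\mu^2}{2} = \frac{1+\mu^2}{2}$, which is exactly the claimed bound $\cos(\phi/2) \leq \left((1+\mu^2)/2\right)^{1/2}$.

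The only genuine obstacle is to recognize Jung's theorem as the right tool and to check that it applies: one must note that $\proj_K(m)$ is compact (it is a closed subset of the sphere $\Sph(m, \d_K(m))$, since $K$ is compact), so that its diameter is attained by an actual pair of projections, and that its circumradius coincides with the quantity $r_K(m)$ appearing in \eqref{eq:mudef}. Everything beyond that is the chord–angle identity on a sphere. It is worth observing that the dimension-dependent factor $\sqrt{2(1+1/d)}$ furnished by Jung is discarded in favour of the weaker $\sqrt{2}$, which is exactly what renders the final bound $\left((1+\mu^2)/2\right)^{1/2}$ independent of the dimension $d$.
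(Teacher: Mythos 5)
Your proof is correct and follows essentially the same route as the paper: both arguments read the bound $\MR{r}_K(m)/\d_K(m) \geq \sqrt{1-\mu^2}$ off the gradient formula \eqref{eq:mudef}, invoke Jung's theorem to extract two projections at distance at least $\sqrt{2}\,\MR{r}_K(m)$, and finish with the chord--angle identity on the sphere $\Sph(m,\d_K(m))$. (Incidentally, you state the criticality inequality in the correct direction, $\mu^2 \geq 1 - \MR{r}_K^2(m)/\d_K^2(m)$, whereas the paper's first displayed inequality has it reversed --- evidently a typo there, since the final estimate uses the direction you wrote.)
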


\begin{proof}
We use the characterization of the gradient of the distance function
given in equation \eqref{eq:mudef}. If $\B(\gamma_K(m), \MR{r}_K(m))$
denotes the smallest ball enclosing $\proj_K(m)$, then $\mu^2 \leq 1 -
\MR{r}^2_K(m)/\d^2_K(m)$.  Using Jung's theorem and the definition of
the diameter, there must exists two points $x,y$ in $\proj_K(m)$ whose
distance $r'$ is larger than $\sqrt{2} \MR{r}_K(m)$. The following
bound on the cosine of the angle $\theta=\frac{1}{2}\angle(x-m,y-m)$
concludes the proof:
\begin{equation}
\begin{split}
\cos^2(\theta) = 1 - \frac{(r'/2)^2}{d^2_K(m)}
\leq  1 - \frac{1}{2}\frac{\MR{r}^2_K(m)}{d^2_K(m)} \leq (1 +
\mu^2)/2
\label{eq:opposite-point}
\end{split}
\end{equation}
\end{proof}

\begin{lemma}
\label{lemma:dist-K-mumedial}
The maximum distance from a point in $\Medial_\mu(K)$ to
$K$ is bounded by $\frac{1}{\sqrt{2}}\diam(K)/\left(1-\mu^2\right)^{1/2}$
\end{lemma}

\begin{proof}
Let $x,y$ be two orthogonal projections of $m \in \Medial_\mu(K)$ on
$K$ as given by the previous lemma. Then, using equation
\eqref{eq:opposite-point}, one obtains
$$1 - \frac{\nr{x - y}^2/4}{\d_K^2(m)} \leq
(1+\mu^2)/2.$$ Hence, $\d_K^2(m) \leq \frac{1}{2}(1-\mu^2)^{-1}
\nr{x - y}^2$, which proves the result.
\end{proof}

Let us denote by $S_\mu^\eps$ the set of points $x$ of the
hypersurface $\partial K^\eps$ that satisfies the three conditions
below:
\begin{itemize}
\item[(i)] the normal distance to the medial axis is bounded below:
  $\tau(x) \geq \eps$ ;
\item[(ii)] the image of $x$ by $\ell$ is in the $\mu$-medial axis of $K$:
$\ell(x) \in \Medial_\mu(K)$;
\item[(iii)] there exists another projection $y$ of $m = \ell(x)$ on
  $\partial K^\eps$ with
  $$\cos\left(\frac{1}{2}\angle(p-m,q-m)\right) \leq
  \sqrt{\frac{1+\mu^2}{2}}$$
\end{itemize}

A reformulation of Lemmas \ref{lemma:opposite-point} and 
\ref{lemma:psi} is the following  corollary:
\begin{corollary}
The image of $S_\mu^\eps$ by the map $\ell$ covers the whole
$2\eps$-away $\mu$-medial axis: $\ell(S_\mu^\eps) = \Medial_\mu(K)
\cap (\RR^d \setminus K^{2\eps})$
\end{corollary}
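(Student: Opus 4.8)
The plan is to prove the set equality by showing the two inclusions separately, the reverse one being the substantive direction. The engine behind both is the elementary fact that $\d_K$ grows at unit rate along the normal ray $t \mapsto \Psi^t(x)$ until it meets the medial axis: if $x \in \partial K^\eps$ has $\tau(x) > 0$, then $x \notin \Medial(K)$ (since $\tau$ is set to zero on the medial axis), so $x$ has a single projection $p$, the gradient $\nabla_x \d_K$ equals $(x-p)/\eps$, the projection stays equal to $p$ along the ray, and $\d_K(\Psi^t(x)) = \eps + t$ for every $t \leq \tau(x)$; in particular $\d_K(\ell(x)) = \eps + \tau(x)$. For the forward inclusion $\ell(S_\mu^\eps) \subseteq \Medial_\mu(K) \cap (\RR^d \setminus K^{2\eps})$, I take $x \in S_\mu^\eps$ and set $m = \ell(x)$: condition (ii) gives $m \in \Medial_\mu(K)$ directly, while condition (i), $\tau(x) \geq \eps$, combined with the identity above yields $\d_K(m) = \eps + \tau(x) \geq 2\eps$, placing $m$ outside $K^{2\eps}$. (Only the nonstrict estimate is available here, but the level set $\{\d_K = 2\eps\}$ is $\Haus^d$-negligible and irrelevant for the covering-number application, so I will not dwell on this boundary point.)

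For the reverse inclusion, where the actual work lies, I would take $m \in \Medial_\mu(K)$ with $\d_K(m) > 2\eps$ and construct a preimage in $S_\mu^\eps$. Lemma~\ref{lemma:opposite-point} supplies two projections $p, q \in \proj_K(m)$ with $\cos(\tfrac12\angle(p-m,q-m)) \leq \sqrt{(1+\mu^2)/2}$. I then lift $p$ radially to $x := p + \eps (m-p)/\nr{m-p}$ on the segment $[p,m]$, and likewise $q$ to $y$. A triangle-inequality argument shows $\d(m, \partial K^\eps) = \d_K(m) - \eps$, a value attained at both $x$ and $y$, so each is a projection of $m$ onto $\partial K^\eps$; Lemma~\ref{lemma:psi} (applicable since $m \in \Medial(K)$ and $\d_K(m) > \eps$) then gives $\ell(x) = \ell(y) = m$ and $\tau(x) = \d_K(m) - \eps > \eps$, verifying (i) and (ii). For (iii), since $x$ lies on the open segment $]p,m[$ the vector $x - m$ is a positive multiple of $p - m$, and similarly $y - m$ of $q - m$, so $\angle(x-m, y-m) = \angle(p-m, q-m)$ and the angle bound is inherited verbatim from Lemma~\ref{lemma:opposite-point}. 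Hence $x \in S_\mu^\eps$ and $\ell(x) = m$, completing the inclusion.

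The main obstacle is the reverse inclusion, and within it the point needing care is showing that the radial lift $x$ is genuinely a projection of $m$ onto $\partial K^\eps$ whose normal ray reaches $m$ as its \emph{first} intersection with the medial axis, i.e. that nothing forces $\tau(x) < \d(x,m)$. This is precisely the unique-projection mechanism already exploited in Lemma~\ref{lemma:psi}: every interior point of $]p,m[$ sees only $p$ as nearest point of $K$, so the open segment avoids $\Medial(K)$ and the ray first meets the medial axis exactly at $m$. Once this is secured the three defining conditions of $S_\mu^\eps$ fall out as above, and together with the forward inclusion the claimed equality $\ell(S_\mu^\eps) = \Medial_\mu(K) \cap (\RR^d \setminus K^{2\eps})$ follows.
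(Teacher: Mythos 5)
Your proof is correct and follows exactly the route the paper intends: the corollary is stated there as ``a reformulation of Lemmas~\ref{lemma:opposite-point} and \ref{lemma:psi}'', and your argument is precisely that reformulation spelled out --- lifting the two angle-separated projections from Lemma~\ref{lemma:opposite-point} radially to $\partial K^\eps$ and invoking the unique-projection mechanism of Lemma~\ref{lemma:psi} to get $\ell(x)=m$ and $\tau(x)=\d_K(m)-\eps$. Your parenthetical about the nonstrict inequality $\d_K(\ell(x))\geq 2\eps$ on the forward inclusion is a fair observation about an imprecision in the statement itself (the paper silently ignores the level set $\d_K=2\eps$, and only the reverse inclusion is used in the proof of Theorem~\ref{th:covering:mumedial}), so it does not affect correctness.
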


%% \begin{lemma}
%% The restriction of the map $\ell$ to $\partial K_\mu^\eps$ is
%% continuous.
%% \end{lemma}

%% \begin{proof}
%% Let $(x_n)$ be a sequence of points of $\partial K^\eps_\mu$
%% converging to $x$, and $m_n \in \Medial_\mu(K)$ be their image by
%% $\ell$. By compactness of $\Medial_\mu(K)$ we only have to show
%% that every adherent point of $(m_n)$ is equal to $\ell(x)$ : we can
%% suppose that $(m_n)$ tends to some $m_\infty \in \Medial_\mu(K)$ and
%% will show that $m_\infty = \ell(x)$. Since every $x_n$ is a projection of
%% $m_n$ on $\partial K^\eps$, $x$ must be a projection of $m_\infty$ on
%% $\partial K^\eps$. The previous lemma concludes the proof.
%% \end{proof}

%% Note that this Lemma is not sufficient to bound the covering number of
%% $\Medial_\mu(K) \cap (\RR^d\setminus K^\eps)$ from those of $\partial
%% K_\mu^\eps$. In order to obtain such a bound, we need to prove the
%% Lipschitz regularity of the map $\ell$.

\subsection{Lipschitz estimations for the map $\ell$}

In this paragraph, we bound the Lipschitz constants of the
restriction of the maps $\nabla \d_K$, $\tau$ and (finally)
$\ell$ to the subset $S_\mu^\eps \subseteq \partial K^\eps$.

First, let $\partial K^{\eps,t}$ be the set of points $x$ in $\partial
K^\eps$ where the distance function is differentiable, and such that
$\tau(x)$ is bounded from below by $t$. In particular, notice that
$S_\mu^\eps$ is contaiend in $\partial K^{\eps,\eps}$.  The following
Lemma proves that the functions $\Psi^t$ and $\nabla_x \d_K$ are
Lipschitz on $\partial K^{\eps,t}$:

\begin{lemma} \label{lemma:grad:lipschitz}
\begin{itemize} \item[(i)] The restriction of $\Psi^t$ to $\partial K^{\eps,t}$
is $(1+t/\eps)$-Lipschitz.
\item[(ii)]
The gradient of the distance function, $x \mapsto
\nabla_x \d_K$, is $3/\eps$-Lipschitz on $\partial K^{\eps,\eps}$.
\end{itemize}
\end{lemma}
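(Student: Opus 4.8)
The plan is to prove (i) first, since (ii) follows from it almost immediately. Fix $x,y \in \partial K^{\eps,t}$ and write $p = \p_K(x)$, $q = \p_K(y)$ for their (unique) projections and $n_x = \nabla_x\d_K = (x-p)/\eps$, $n_y = \nabla_y\d_K = (y-q)/\eps$ for the associated unit gradients, so that $x = p + \eps n_x$ and $\Psi^t(x) = x + t\,\nabla_x\d_K = p + (\eps + t) n_x$, and symmetrically for $y$. Setting $v = p - q$ and $w = n_x - n_y$, one has $x - y = v + \eps w$ and $\Psi^t(x) - \Psi^t(y) = v + (\eps+t)w$, so the whole statement reduces to comparing the squared norms of these two vectors, and in particular to controlling the cross term $\sca{v}{w}$.

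The key point --- and the only place the hypothesis $\tau(x),\tau(y) \geq t$ enters --- is that $\Psi^t(x)$ and $\Psi^t(y)$ are themselves at distance exactly $\eps + t$ from $K$, with $p$ and $q$ still among their projections. Indeed $\tau(x)\geq t$ means the segment $[x,\Psi^t(x)]$ meets the medial axis at most at its endpoint, so $\d_K$ grows at unit speed along it and $\d_K(\Psi^t(x)) = \eps + t$. This yields the two ``empty-ball'' inequalities $\nr{\Psi^t(x) - q} \geq \eps + t = \nr{\Psi^t(x) - p}$ and $\nr{\Psi^t(y) - p} \geq \eps + t = \nr{\Psi^t(y) - q}$. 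Writing each as a difference of squares, $\nr{\Psi^t(x)-q}^2 - \nr{\Psi^t(x)-p}^2 = \sca{p-q}{2\Psi^t(x)-p-q}$ and likewise for $y$, and adding, the terms $\pm(p+q)$ cancel and one is left with the monotonicity estimate $\sca{p - q}{\Psi^t(x) - \Psi^t(y)} \geq 0$, that is $\nr{v}^2 + (\eps+t)\sca{v}{w} \geq 0$.

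It then remains to turn this into the Lipschitz bound by a direct computation: using $\nr{v}^2 + (\eps+t)\sca{v}{w} \geq 0$ one checks that
\begin{equation*}
\left(1 + \frac{t}{\eps}\right)^2 \nr{x - y}^2 - \nr{\Psi^t(x) - \Psi^t(y)}^2 = \frac{t}{\eps}\left[\frac{2\eps + t}{\eps}\nr{v}^2 + 2(\eps + t)\sca{v}{w}\right] \geq \frac{t^2}{\eps^2}\nr{v}^2 \geq 0,
\end{equation*}
the coefficient of $\nr{w}^2$ cancelling exactly; this proves (i). For (ii) I would simply take $t = \eps$: then (i) gives that $\Psi^\eps$ is $2$-Lipschitz on $\partial K^{\eps,\eps}$, and since $\nabla_x\d_K = (\Psi^\eps(x) - x)/\eps$, the triangle inequality yields $\nr{\nabla_x\d_K - \nabla_y\d_K} \leq \bigl(\nr{\Psi^\eps(x) - \Psi^\eps(y)} + \nr{x - y}\bigr)/\eps \leq 3\nr{x-y}/\eps$. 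The main obstacle is conceptual rather than computational: if one uses only the empty balls of radius $\eps$ at $x$ and $y$ themselves, the same manipulation produces only $\sca{v}{x-y}\geq 0$, which is too weak, and indeed the Lipschitz estimate genuinely fails for points whose normal distance to the medial axis is below $t$ (already for $K$ a pair of points). Recognizing that $\tau \geq t$ must be invoked to upgrade the empty balls to radius $\eps + t$ at the \emph{shifted} points $\Psi^t(x),\Psi^t(y)$ is the crux of the argument.
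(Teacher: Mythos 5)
Your proof is correct and follows essentially the same route as the paper: both arguments rest on the fact that $\Psi^t(x)$ still admits $p$ as a projection (which the paper also invokes without further proof), symmetrize the two resulting empty-ball inequalities into the monotonicity estimate $\sca{p-q}{\Psi^t(x)-\Psi^t(y)}\geq 0$, and expand squared norms to reach the same bound $\nr{\Psi^t(x)-\Psi^t(y)}^2 \leq (1+t/\eps)^2\nr{x-y}^2 - (t/\eps)^2\nr{p-q}^2$. Your deduction of (ii) from (i) via $\nabla_x \d_K = (\Psi^\eps(x)-x)/\eps$ and the triangle inequality is the intended one (the paper's written proof only treats (i) explicitly).
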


\begin{proof}
Let $x$ and $x'$ be two points of $\partial K^\eps$ with
$\tau(x),\tau(x') > t$, $p$ and $p'$ their projections on $K$ and $y$
and $y'$ their image by $\Psi^t$. We let $u = 1 + t/\eps$ be the scale
factor between $x-p$ and $y-p$, \ie:
$$ (*)~~~y' - y = u (x' - x) + (1-u) (p' - p)$$
Using the fact that $y$ projects to $p$, and the definition of $u$, we
have:
$$\nr{y - p}^2 \leq \nr{y - p'}^2 = \nr{y-p}^2 + \nr{p - p'}^2 + 2
\sca{y - p}{p - p'}$$
\begin{equation*}
\begin{split}
&\hbox{\ie} 0 \leq \nr{p - p'}^2 + 2 u \sca{x-p}{p-p'} \\
&\hbox{\ie} \sca{p-x}{p  - p'} \leq \frac{1}{2}u^{-1} \nr{p - p'}^2
\end{split}
\end{equation*}
Summing this last inequality, the same inequality with primes and the
equality $\sca{p' -p }{p-p'} = - \nr{p'-p}^2$ gives
$$(**)~~~\sca{x'-x}{p' - p} \leq \left(1 -
  u^{-1}\right)\nr{p'-p}^2$$
Using $(*)$ and $(**)$ we get the  desired Lipschitz inequality
$$
\begin{aligned}
\nr{y - y'}^2 &= u^2 \nr{x - x'}^2 + (1-u)^2 \nr{p'-p}^2 + 2 u(1-u)\sca{x'
  - x}{p' - p} \\
&\leq u^2 \nr{x - x'}^2 - (1-u)^2 \nr{p'-p}^2
\leq \left(1+t/\eps\right)^2 \nr{x - x'}^2
\end{aligned}$$
\end{proof}

The second step is to prove that the restriction of $\tau$ to the set
$S^\eps_\mu$ is also Lipschitz. The technical core of the proof is
contained in the following geometric lemma:

\begin{lemma}
\label{lemma:taulip:computation}
Let $t_0$ denote the intersection time of the ray $x_0 + t v_0$ with
the medial hyperplane $H_{x_0,y_0}$ between $x_0$ and another point
$y_0$, and $t(x,v)$ the intersection time between the ray $x + tv$ and
$H_{x y_0}$. Then, assuming:
\begin{align}
\alpha\nr{x_0-y_0} &\leq \sca{v_0}{x_0-y_0},  \\
\nr{x - y_0} &\leq D, \\
\nr{v - v_0} &\leq \lambda \nr{x - x_0}, \\
\eps &\leq t(x_0,y_0)
\end{align}
one obtains the following bound:
$$
t(x,v) \leq t(x_0, v_0) + \frac{6}{\alpha^2}
(1 + \lambda D) \nr{x - x_0}
$$
as soon as $\nr{x - x_0}$ is small enough (namely, smaller than
$ \eps \alpha^2(1+3 \lambda D)^{-1}$).
\end{lemma}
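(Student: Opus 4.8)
The plan is to reduce the statement to an explicit rational formula for the crossing time and then estimate it directly. The point $x + t(x,v)\,v$ lies on $H_{x,y_0}$ exactly when $\nr{(x+tv)-x} = \nr{(x+tv)-y_0}$; squaring this equality cancels the quadratic terms in $t$ and leaves a linear equation whose solution is
\[
 t(x,v) = \frac{\nr{x-y_0}^2}{2\,\sca{y_0-x}{v}}.
\]
I set $N = \nr{x-y_0}^2$ and $M = \sca{y_0-x}{v}$, and write $N_0, M_0$ for the same quantities at $(x_0,v_0)$, so that $t(x,v) = N/(2M)$ and $t_0 = N_0/(2M_0)$. The transversality hypothesis provides the lower bound $M_0 \geq \alpha\nr{x_0-y_0}$ (and in particular $M_0>0$, so the ray genuinely crosses the hyperplane).

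Next I would express the difference as a single fraction and split its numerator:
\[
 t(x,v) - t_0 = \frac{N M_0 - N_0 M}{2\,M M_0} = \frac{M_0 (N-N_0) + N_0 (M_0-M)}{2\,M M_0}.
\]
Writing $s := \nr{x_0-y_0}$, the Cauchy--Schwarz bound $M_0 \leq s$ (recall $v_0$ is a unit vector) together with $t_0 = N_0/(2M_0)$ pins the scale between $2\alpha t_0$ and $2t_0$; combined with the fourth hypothesis $\eps \leq t_0$ this gives $s \geq 2\alpha\eps$, and also $\nr{x-x_0}\leq s$ since $\nr{x-x_0}\leq \eps\alpha^2 \leq 2\alpha\eps \leq s$ for $\alpha\leq 1$.

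The two perturbation estimates are then elementary. Splitting $M - M_0 = \sca{y_0-x}{v-v_0} + \sca{x_0-x}{v_0}$ and using the hypotheses $\nr{x-y_0}\leq D$ and $\nr{v-v_0}\leq\lambda\nr{x-x_0}$ yields $\lvert M-M_0\rvert \leq (1+\lambda D)\nr{x-x_0}$; the difference-of-squares identity with the triangle inequality gives $\lvert N-N_0\rvert \leq \nr{x-x_0}\bigl(\nr{x-y_0}+s\bigr) \leq 3s\,\nr{x-x_0}$ once $\nr{x-x_0}\leq s$. Feeding these into the numerator (with $M_0\leq s$ and $N_0 = s^2$) bounds it by $s^2(4+\lambda D)\nr{x-x_0}$.

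The main obstacle, and precisely the reason the smallness threshold $\nr{x-x_0}\leq \eps\alpha^2(1+3\lambda D)^{-1}$ appears, is keeping the denominator bounded away from zero: $M$ could a priori vanish, which is exactly the ray becoming tangent to the moved hyperplane and $t(x,v)$ blowing up. I must therefore propagate the lower bound on $M_0$ to $M$. From $\lvert M-M_0\rvert\leq(1+\lambda D)\nr{x-x_0}$ and the threshold one gets $(1+\lambda D)\nr{x-x_0}\leq \alpha^2\eps \leq \tfrac12\alpha s$, whence $M \geq M_0 - \lvert M-M_0\rvert \geq \tfrac12\alpha s$ and $2MM_0 \geq \alpha^2 s^2$. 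Dividing the numerator bound by this denominator gives $t(x,v)-t_0 \leq \alpha^{-2}(4+\lambda D)\nr{x-x_0} \leq 6\alpha^{-2}(1+\lambda D)\nr{x-x_0}$, which is the claimed estimate. I expect everything except this chain of inequalities linking $\eps$, $t_0$, $s$ and the perturbation size to be routine algebra.
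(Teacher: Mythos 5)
Your argument is correct, and it shares its skeleton with the paper's proof: both start from the explicit rational formula $t(x,v)=\nr{x-y_0}^2/(2\sca{y_0-x}{v})$, both extract $\nr{x_0-y_0}\geq 2\alpha\eps$ from the hypothesis $\eps\leq t(x_0,v_0)$, and both use the smallness threshold on $\nr{x-x_0}$ for exactly one purpose --- keeping the denominator $\sca{y_0-x}{v}$ above $\tfrac{1}{2}\alpha\nr{x-y_0}$ so that the crossing time cannot blow up. Where you genuinely diverge is in how the Lipschitz estimate is then harvested. The paper differentiates: it computes $\nabla_x t$ and $\nabla_v t$, bounds their norms by $6/\alpha^2$ and $4D/\alpha^2$ on the admissible region, and concludes by the mean value inequality. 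You instead take a finite difference, writing $t(x,v)-t_0$ as a single fraction and splitting the numerator as $M_0(N-N_0)+N_0(M_0-M)$, with each piece controlled by an elementary perturbation bound ($\lvert M-M_0\rvert\leq(1+\lambda D)\nr{x-x_0}$, $\lvert N-N_0\rvert\leq 3s\nr{x-x_0}$). Your route is more elementary --- no calculus --- and it quietly repairs a point the paper leaves implicit, namely that a gradient bound can only be integrated if it holds along the whole path joining $(x_0,v_0)$ to $(x,v)$, not just at the endpoints. Both computations land comfortably inside the stated constant $6\alpha^{-2}(1+\lambda D)$. (Like the paper, you read the transversality hypothesis as a positive lower bound on the denominator $\sca{y_0-x_0}{v_0}$ despite the sign as literally printed in the statement; that is the intended reading, since otherwise $t_0$ would be negative.)
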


\begin{proof}
We search the time $t$ such that $\nr{x+tv - x}^2 = \nr{x+tv - y_0}^2$, \ie
$$ t^2\nr{v}^2 = \nr{x-y_0}^2 + 2 t\sca{x-y_0}{v} + t^2\nr{v}^2$$
Hence, the intersection time is $t(x,v) = \nr{x - y_0}^2/2 \sca{y_0 -
  x}{v}$. The lower bound on $t(x_0,y_0)$ translates as
$$ \eps \leq \frac{1}{2} \frac{\nr{x_0 - y_0}^2}{\sca{x_0 - y_0}{v_0}} \leq
\frac{1}{2\alpha} \nr{x_0 - y_0}
$$

 If $\nabla_x t$ and $\nabla_v t$ denote the gradients of
this function in the direction of $v$ and $x$, one has:
\begin{align*}
\nabla_v t(x,v) &= \frac{1}{2} \frac{\nr{x - y_0}^2 (x - y_0)}{\sca{y_0 - x}{v}^2} \\
\nabla_{x} t(x,v) &= \frac{1}{2}\frac{\nr{x - y_0}^2 v + 
2 \sca{y_0 - x}{v} (x - y_0) }{\sca{y_0 - x}{v}^2}
\end{align*}
Now, we bound the denominator of this expression:
\begin{equation*}
\begin{split}
 \sca{x - y_0}{v} &= \sca{x - y_0}{v - v_0} + \sca{x - x_0}{v_0} + \sca{x_0 - y_0}{v_0} \\
&\geq \alpha\nr{x_0 - y_0} - (1 + \lambda  \nr{x - y_0}) \nr{x - x_0}\\
&\geq \alpha\nr{x - y_0} - (2 + \lambda D) \nr{x - x_0}
\end{split}
\end{equation*}
The scalar product $ \sca{x - y_0}{v}$ will be larger than (say)
$\frac{\alpha}{2} \nr{x - y_0}$ provided that
$$ (2 + \lambda D) \nr{x - x_0} \leq \frac{\alpha}{2} \nr{x - y_0} $$
or, bounding from below $\nr{x - y_0}$ by $\nr{x_0 - y_0} - \nr{x_0 - x}
\geq 2\alpha \eps - \nr{x_0 - x}$,
provided that:
$$ (3 + \lambda D) \nr{x - x_0} \leq \alpha^2 \eps
$$
This is the case in particular if $\nr{x - x_0} \leq \alpha^2 \eps 
(3+ \lambda D)^{-1}$. Under that assumption, we have the following bound on the 
norm of the gradient, from which the Lipschitz inequality follows:
$$ \nr{\nabla_{x} t(x,v)} \leq 6/\alpha^2 
\hbox{~~~and~~~}
\nr{\nabla_v t(x,v)} \leq 4D/\alpha^2$$
\end{proof}

Using this Lemma, we are able to show that the function $\ell$ is locally
Lipschitz on the subset $S_\mu^\eps \subseteq \partial K^\eps$:

\begin{proposition}
\label{prop:ell-lipschitz}
The restriction of $\tau$ to $S_\mu^\eps$ is locally $L$-Lipschitz, in
the sense that if $(x,y) \in S_\mu^\eps$ are such that $\nr{x - y} \leq
\delta_0$, then $\nr{\ell(x) - \ell(y)} \leq L \nr{x - y}$ with
$$ L = \BigO\left(\frac{1+\diam(K)/\eps}{(1 - \mu)^{1/2}}\right)\hbox{
  and }\delta_0 = \BigO(\eps/L)$$
\end{proposition}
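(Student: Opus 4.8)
The plan is to exploit the identity $\ell(x) = x + \tau(x)\,\nabla_x\d_K$ and to estimate separately the variations of the three factors. Writing $v(x)=\nabla_x\d_K$ and taking $x,x'\in S_\mu^\eps$ with $\nr{x-x'}$ small, one has
\[
\ell(x)-\ell(x') = (x-x') + \bigl(\tau(x)-\tau(x')\bigr)\,v(x) + \tau(x')\,\bigl(v(x)-v(x')\bigr).
\]
Two of the three terms are already under control: by Lemma~\ref{lemma:grad:lipschitz}(ii) the gradient map $v$ is $3/\eps$-Lipschitz on $\partial K^{\eps,\eps}\supseteq S_\mu^\eps$, while Lemma~\ref{lemma:dist-K-mumedial} gives $\tau(x')\le\d_K(\ell(x'))\le\frac{1}{\sqrt2}\diam(K)/\sqrt{1-\mu^2}$, so the last term is $\BigO\bigl((\diam(K)/\eps)/\sqrt{1-\mu}\bigr)\,\nr{x-x'}$. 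Everything therefore reduces to a local Lipschitz bound for $\tau$ on $S_\mu^\eps$, which is where Lemma~\ref{lemma:taulip:computation} enters.

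To apply that lemma I fix $x\in S_\mu^\eps$ and set $m=\ell(x)$. Condition (iii) in the definition of $S_\mu^\eps$ furnishes a second projection $y^\ast$ of $m$ onto $\partial K^\eps$; since $x$ and $y^\ast$ are both at distance $\tau(x)$ from $m$, the point $m$ lies on the bisecting hyperplane of $x$ and $y^\ast$, and because the ray $t\mapsto x+t\,v(x)$ meets that hyperplane only once and reaches $\Medial(K)$ first at $m$, the value $\tau(x)$ is exactly the intersection time $t(x,v(x))$ of this ray with the bisector. For a nearby $x'$ I would first establish the one-sided comparison $\tau(x')\le t(x',v(x'))$, the intersection time of the ray from $x'$ with the bisector of $x'$ and the \emph{same} point $y^\ast$: letting $q^\ast\in K$ be a projection of $y^\ast$ (at distance $\eps$), the point $w'$ on that ray at time $t(x',v(x'))$ satisfies $\d_K(w')\le\nr{w'-q^\ast}\le\nr{w'-y^\ast}+\eps=\nr{w'-x'}+\eps$, so the competing foot $q^\ast$ forces the ray to have crossed the medial axis no later than $w'$.

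It then remains to feed Lemma~\ref{lemma:taulip:computation} with $x_0=x$, $v_0=v(x)$, $y_0=y^\ast$ and $v=v(x')$. Hypothesis (3) holds with $\lambda=3/\eps$, hypothesis (4) is condition (i), i.e.\ $\tau(x)\ge\eps$, and hypothesis (2) holds with $D=\BigO(\diam(K))$ since $\nr{x'-y^\ast}\le\diam(K)+2\eps$. The decisive input is the transversality hypothesis (1): in the isosceles triangle with apex $m$, apex angle $2\theta=\angle(x-m,y^\ast-m)$ and equal sides $\tau(x)$, a direct computation gives $|\sca{v(x)}{y^\ast-x}|/\nr{x-y^\ast}=\sin\theta$, so one may take $\alpha=\sin\theta$; condition (iii) reads $\cos\theta\le\sqrt{(1+\mu^2)/2}$, whence $\alpha^2\ge(1-\mu^2)/2$. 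Running the lemma and exchanging the roles of $x$ and $x'$ (using the second projection attached to $x'$) yields a two-sided bound $|\tau(x)-\tau(x')|\le\frac{6}{\alpha^2}(1+\lambda D)\,\nr{x-x'}$, valid for $\nr{x-x'}\le\delta_0=\eps\alpha^2(1+3\lambda D)^{-1}=\BigO(\eps/L)$.

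The step I expect to be the crux is the transversality estimate and the way the factor it produces interacts with the size of $\tau$. The intersection time degenerates exactly when the gradient ray becomes tangent to the medial axis, that is when $\theta\to0$ as $\mu\to1$, and bounding the \emph{full} gradient of $t(x,v)$ naively costs $1/\alpha^2\sim1/(1-\mu)$, which is one power too many. Recovering the claimed exponent should rest on the fact that $x$ is constrained to move \emph{tangentially} to the level set $\partial K^\eps$, so that $x-x'$ is orthogonal to $v(x)$ and the dangerous directional derivative of the intersection time collapses to order $1/\alpha\sim1/\sqrt{1-\mu}$; making this precise, and in particular controlling the simultaneous variation of $v(x)$ whose Lipschitz constant $3/\eps$ enters multiplied by $\tau=\BigO(\diam(K)/\sqrt{1-\mu})$, is the delicate balance on which the announced constant $L=\BigO\bigl((1+\diam(K)/\eps)/(1-\mu)^{1/2}\bigr)$ hinges. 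Inserting the resulting $\tau$-bound together with the two easy terms into the displayed identity then finishes the proof.
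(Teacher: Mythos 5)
Your proposal follows the same route as the paper's own proof: the identical three--term decomposition of $\ell(x)-\ell(x')$, the bound $\nr{\tau}_\infty = \BigO\bigl(\diam(K)/\sqrt{1-\mu}\bigr)$ from Lemma \ref{lemma:dist-K-mumedial}, the $3/\eps$-Lipschitz bound on $\nabla\d_K$ from Lemma \ref{lemma:grad:lipschitz}, and the reduction of the Lipschitz estimate for $\tau$ to Lemma \ref{lemma:taulip:computation}, applied with $y_0$ the second projection furnished by condition (iii), $\alpha=\sin\theta\geq\bigl((1-\mu^2)/2\bigr)^{1/2}$, $\lambda=3/\eps$ and $D=\BigO(\diam(K))$. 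Your justification of the one-sided comparison $\tau(x')\leq t(x',v(x'))$ is a genuine addition (the paper merely asserts that ``$t(x',v')$ is an upper bound for $\tau(x')$''), but as written it is slightly short: the inequality $\d_K(w')\leq\nr{w'-x'}+\eps$ is satisfied, with equality, by \emph{every} point of the ray preceding the medial axis, so by itself it forces nothing. The argument does close: if $w'$ strictly preceded $\ell(x')$ then $\d_K(w')=\nr{w'-x'}+\eps$, the whole chain collapses to equalities, and $q^\ast$ becomes a second projection of $w'$ on $K$, distinct from the projection of $x'$ (otherwise $y^\ast=x'$), contradicting $w'\notin\Medial(K)$.

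The difficulty you flag at the end is real, and your proposal does not resolve it. A direct application of Lemma \ref{lemma:taulip:computation} with $\alpha^2=(1-\mu^2)/2$ gives
$$\Lip_{\delta_0}\left[\restr{\tau}{S_\mu^\eps}\right]\leq\frac{6}{\alpha^2}(1+\lambda D)=\BigO\left(\frac{1+\diam(K)/\eps}{1-\mu^2}\right),$$
that is a power $1/(1-\mu)$ rather than the claimed $1/(1-\mu)^{1/2}$. Be aware that the paper's Step~1 performs exactly the same silent jump: immediately after displaying $\frac{6}{\alpha^2}(1+\lambda D)\nr{x-x'}$ it declares $M=\BigO\bigl((1+\diam(K)/\eps)/\sqrt{1-\mu^2}\bigr)$, which does not follow from that display. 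Your proposed repair --- that $x-x'$ is tangent to $\partial K^\eps$, hence nearly orthogonal to $v(x)$, so that the dangerous directional derivative of the intersection time should cost only $1/\alpha$ --- is plausible but remains a heuristic in your write-up: you would need to quantify the near-orthogonality of a chord of the merely Lipschitz level set $\partial K^\eps$, and to check that the term $\nr{\nabla_v t}\,\lambda\nr{x-x'}=\BigO(\lambda D/\alpha^2)\nr{x-x'}$, which does not benefit from tangentiality, is also of the right order. As they stand, both your argument and the paper's displayed computation establish the proposition only with $L=\BigO\bigl((1+\diam(K)/\eps)/(1-\mu)\bigr)$, which would propagate to correspondingly weaker exponents in Theorem \ref{th:covering:mumedial} and Theorem \ref{prop:proj-stab}.
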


In order to simplify the proof of this Proposition, we will make use
of the following notation, where $f$ is any function from $X\subseteq
\RR^d$ to $\RR$ or $\RR^d$:
$$\Lip_\delta \restr{f}{X} := \sup \{ \nr{f(x) -
  f(y)}/\nr{x-y}; (x,y) \in X^2 \hbox{ and } \nr{x - y} \leq \delta
\}.$$

\begin{proof}%[Proof of Proposition \ref{prop:ell-lipschitz}]
We start the proof by evaluating the Lipschitz constant of the
restriction of $\tau$ to $S_\mu^\eps$, using Lemma
\ref{lemma:taulip:computation} (Step 1), and then deduce the Lipschitz
estimate for the function $\ell$ (Step 2).

\begin{paragraph}{\textsc{Step 1}}
Thanks to Lemma \ref{lemma:opposite-point}, for any $x$ in
$S_\mu^\eps$, there exists another projection $y$ of $m = \ell(x)$ on
$\partial K^\eps$ such that the cosine of the angle $\theta =
\angle{(x - m, y - m)}$ is at most $\sqrt{(1+\mu^2)/2}$. Let us denote by
$v = \nabla_x \d_K$ the unit vector from $x$ to $m$. The angle between 
$\rvec{yx}$ and $v$ is $\pi/2 - \theta$. Then,
$$\cos(\pi/2 - \theta) = \sin(\theta) = \sqrt{1 - \cos^2(\theta)} \geq
\alpha:=\left(\frac{1-\mu^2}{2}\right)^{1/2}$$

As a consequence, with the $\alpha$ introduced above, one has
$\alpha\nr{x-y} \leq \alpha \abs{\sca{v}{x-y}}$. Moreover, $\nr{x -
  y}$ is smaller than $D = \diam(K^\eps) \leq \diam(K)+\eps$.  For any
other point $x'$ in $S_\mu^\eps$, and $v' = \nabla_{x'} \d_K$, one has
$\nr{v - v'} \leq \lambda \nr{x - x'}$ with $\lambda = 3/\eps$ (thanks
to Lemma \ref{lemma:grad:lipschitz}).

These remarks allow us to apply Lemma
\ref{lemma:taulip:computation}. Using the notations of this lemma, one
sees that $t(x,v)$ is simply $\tau(x)$ while $t(x',v')$ is an upper bound for
$\tau(x')$. This gives us:
\begin{equation*}
\begin{split}
\tau(x') &\leq \tau(x) + 
 \frac{6}{\alpha^2}
(1 + \lambda D) \nr{x - x'} \\
&\leq \tau(x) + M
\nr{x - x'}\\
\hbox{where}~ M &= \BigO\left(\frac{1+\diam(K)/\eps}{\sqrt{1-\mu^2}}\right)
\end{split}
\end{equation*}
as soon as $x'$ is close enough to $x$. From the statement of Lemma
\ref{lemma:taulip:computation}, one sees that $\nr{x - x'} \leq
\delta_0$ with $\delta_0 = \BigO(\eps/M)$ is enough.  Exchanging
the role of $x$ and $x'$, one proves that $\abs{\tau(x) - \tau(x')}
\leq M \nr{x - x'}$, provided that $\nr{x - x'} \leq \delta_0$.
As a conclusion,
\begin{equation}
\Lip_{\delta_0} \left[\restr{\tau}{S_\mu^\eps}\right] =
\BigO\left(\frac{1+\diam(K)/\eps}{\sqrt{1-\mu^2}}\right)
\label{eq:est:1}
\end{equation}
\end{paragraph}

\begin{paragraph}{\textsc{Step 2}}
We can use the following decomposition of the difference
$\ell(x) - \ell(x')$:
\begin{equation}
\ell(x)- \ell(x') = (x' - x) + (\tau(x) - \tau(x')) \nabla_x \d_K +
\tau(x') (\nabla_x \d_K - \nabla_{x'} \d_K)
\label{eq:ell-sum}
\end{equation}
 in order to bound the
(local) Lipschitz constant of the restriction of $\ell$ to
$S_\mu^\eps$ from those computed earlier.
One deduces from this equation that
\begin{equation}
\Lip_{\delta_0} \left[\restr{\ell}{S_\mu^\eps}\right] \leq 1 +
\Lip_{\delta_0} \left[\restr{\tau}{S_\mu^\eps}\right] +
\nr{\tau}_\infty \Lip_{\delta_0} \left[\restr{\nabla \d_K}{S_\mu^\eps}\right]
\label{eq:ell-lip}
\end{equation}
Thanks to Lemma \ref{lemma:dist-K-mumedial}, one has $\abs{\tau(x)} =
\BigO(\diam(K)/(1-\mu)^{1/2})$; combining this with the estimate from Lemma \ref{lemma:grad:lipschitz} that $\Lip \restr{\nabla \d_K}{S_\mu^\eps} \leq 3/\eps$,
this gives 
\begin{equation}
\nr{\tau}_\infty \Lip_{\delta_0} \left[\restr{\nabla \d_K}{S_\mu^\eps}\right] = 
\BigO(\diam(K)/[\eps (1-\mu)^{1/2}])
\label{eq:est:2}
\end{equation}
Putting the estimates \eqref{eq:est:1} and \eqref{eq:est:2} into 
\eqref{eq:ell-lip} concludes the proof.
\end{paragraph}
\end{proof}

In order to be able to deduce Theorem~\ref{th:covering:mumedial} from
Proposition \ref{prop:ell-lipschitz} we need the following bound on
the covering numbers of a levelset $\partial K^r$, where $K$ is any
compact set in $\RR^d$ (see \cite[Proposition~4.2]{ccsm2009boundary}):
\begin{equation}
\LebNum(\partial K^r, \eps) \leq \LebNum(\partial K, r) \LebNum(\Sph^{d-1}, \eps/2r)
\label{eq:cov-offset}
\end{equation}

\begin{proof}[Proof of Theorem \ref{th:covering:mumedial}]
Applying Proposition \ref{prop:ell-lipschitz}, we get the existence
of $$L = \Lip_{\delta_0} \left[\restr{\ell}{S_\mu^\eps}\right] =
\BigO(\diam(K)/(\eps\sqrt{1-\mu})) \hbox{ and } \delta_0 =
\BigO(\eps/L)$$ such that $\ell$ is locally $L$-Lipschitz. In
particular, for any $\eta$ smaller than~$\delta_0$,
\begin{equation}
\begin{split}
\LebNum\left(\Medial_\mu(K) \cap
(\RR^d\setminus K^{\eps}), \eta\right) &= 
\LebNum\left(\ell(S_\mu^\eps), \eta\right) \\
&\leq \LebNum\left(S_\mu^\eps, \eta/L\right)\\
&\leq \LebNum(\partial
K^{\eps/2}, \eta/L).
\end{split}
\label{eq:th:1}
\end{equation}
The bound on the covering number of the boundary of tubular neighborhoods
(equation \eqref{eq:cov-offset}) gives:
\begin{equation}
\label{eq:th:2}
 \LebNum(\partial K^{\eps/2}, \eta/L)  \leq
\LebNum(\partial K,\eps/2) \LebNum\left(\Sph^{d-1}, \frac{\eta}{L \eps}\right).
\end{equation}
Equations \eqref{eq:th:1} and \eqref{eq:th:2}, and the estimation
$\LebNum(\Sph^{d-1}, \rho) \sim \omega_{d-1} \rho^{d-1}$ yield
$$ \LebNum\left(\Medial_\mu(K) \cap
(\RR^d\setminus K^{\eps}), \eta\right) = \LebNum(\partial K,\eps/2)
\BigO\left(\left[\frac{\eta}{L \eps}\right]^{d-1}\right).
$$ Its suffices to replace $L$ by its value from Proposition
\ref{prop:ell-lipschitz} to finish the proof.
\end{proof}

\section{A quantitative stability result for boundary measures}
\label{sec:stability-boundary-attempt}

In this paragraph, we show how to use the bound on the covering
numbers of the $\eps$-away $\mu$-medial axis given in Theorem
\ref{th:covering:mumedial} in order to get a quantitative version of
the $\LL^1$ convergence results for projections. Notice that the
meaning of \emph{locally} in the next statement could also be made
quantitative using the same proof.

\begin{theorem}
The map $K \mapsto \p_K \in \LL^1(E)$ is locally $h$-H\"older for any
exponent $h < \frac{1}{2 (2d-1)}$.
\label{prop:proj-stab}
\end{theorem}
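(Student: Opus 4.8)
The plan is to make the decomposition used in the proof of Proposition~\ref{prop:proj-stab-nonquant} quantitative. Writing $\delta = \dH(K,K')$ and splitting $E$ according to whether the two projections differ by more than a threshold $L$, we have
$$
\nr{\p_{K'} - \p_K}_{\LL^1(E)} \leq L\,\Haus^d(E) + \Haus^d(\Delta_L(K,K') \cap E)\,\diam(K \cup K').
$$
The first term is harmless and decreases linearly in $L$; the whole difficulty is to bound $\Haus^d(\Delta_L(K,K') \cap E)$ by a quantity that shrinks with $\delta$, and then to balance the two contributions by choosing $L = L(\delta)$ optimally. My first step is a purely metric observation that localises $\Delta_L$ away from $K$: if $x \in \Delta_L(K,K')$ then $L \leq \nr{\p_K(x) - \p_{K'}(x)} \leq \d_K(x) + \d_{K'}(x) \leq 2\,\d_K(x) + \delta$, so that $\d_K(x) \geq (L-\delta)/2$ and $\Delta_L(K,K')$ avoids the tube $K^{(L-\delta)/2}$. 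This is exactly what allows Theorem~\ref{th:covering:mumedial} to be brought to bear.

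Taking $R$ with $E \subseteq K^R$, Lemma~\ref{lem:delta-included} places $\Delta_L(K,K') \cap E$ inside the $\rho$-neighbourhood of $\Medial_\mu(K)$ with $\rho = 2\sqrt{R\delta}$ and $\mu$ given by that lemma. Combined with the distance bound above, the relevant medial-axis points are $\eps'$-away from $K$, with $\eps' = (L-\delta)/2 - \rho$, so that
$$
\Delta_L(K,K') \cap E \subseteq \bigl(\Medial_\mu(K) \cap (\RR^d \setminus K^{\eps'})\bigr)^{\rho}.
$$
I would then estimate this tube using covering numbers: the inequality $\Haus^d(X^\rho) \leq \Haus^d(\B(0,\rho))\,\LebNum(X,\rho)$ together with Theorem~\ref{th:covering:mumedial} applied with $\eta = \rho$ gives
$$
\Haus^d(\Delta_L(K,K') \cap E) = \BigO\!\left(\rho\,\LebNum(\partial K, \eps'/2)\left[\tfrac{\diam(K)}{\sqrt{1-\mu}}\right]^{d-1}\right),
$$
the powers $\rho^{d}$ and $\rho^{-(d-1)}$ collapsing to a single factor $\rho = \BigO(\sqrt{\delta})$.

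It remains to quantify $1-\mu$. Expanding the expression of Lemma~\ref{lem:delta-included} for small $L$ yields $1 - \mu \approx \tfrac12[(L-\delta)/4R]^2 - 4\sqrt{\delta/L}$; choosing $L = \delta^{s}$ with $s < 1/5$ makes the first term dominate, so that $1-\mu$ is of order $\delta^{2s}$, while $\rho$ is of order $\delta^{1/2}$ and $\eps'$ of order $\delta^{s}$. Substituting, the two contributions to the $\LL^1$-norm are of order $\delta^{s}$ and $\delta^{1/2}\,\LebNum(\partial K, \tfrac12\eps')\,\delta^{-s(d-1)}$. Controlling the covering number of $\partial K$ at the vanishing scale $\eps'$ by the $(d-1)$-dimensional rate $\BigO(\eps'^{-(d-1)})$, the second contribution is of order $\delta^{1/2 - 2s(d-1)}$, and the exponent $\min\{s,\ 1/2 - 2s(d-1)\}$ is maximised at $s = 1/(2(2d-1))$, where both contributions are of order $\delta^{1/(4d-2)}$. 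Absorbing the subleading factors (the error terms in the expansion of $\mu$ and the constants in the covering estimate) forces the exponent strictly below this critical balance, which gives local $h$-H\"older continuity of $K \mapsto \p_K$ for every $h < 1/(2(2d-1))$.

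The step I expect to be the main obstacle is the quantitative control of $1-\mu$: everything hinges on keeping $\mu$ bounded away from $1$ at a prescribed polynomial rate in $\delta$, which is the precise tension between wanting $L$ small (to kill the first term) and wanting $L$ not too small (so that $\mu$ does not reach $1$ and the medial-axis volume does not blow up). The triangle-inequality localisation of $\Delta_L$ and the resulting three-way optimisation between $L$, $\rho$ and $\eps'$—including the handling of the $\LebNum(\partial K,\eps'/2)$ factor at the shrinking scale—are the delicate points; once the rates for $\rho$ and $1-\mu$ are in hand, the remaining balancing is routine.
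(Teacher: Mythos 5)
Your overall strategy coincides with the paper's: split $\nr{\p_{K'}-\p_K}_{\LL^1(E)}$ by means of $\Delta_L(K,K')$, localize $\Delta_L$ outside a tube of radius $\sim L/2$ around $K$, apply Lemma~\ref{lem:delta-included} followed by Theorem~\ref{th:covering:mumedial}, and balance exponents with $L=\delta^{s}$. The localization via the triangle inequality, the inclusion into $\bigl(\Medial_\mu(K)\cap(\RR^d\setminus K^{\eps'})\bigr)^{\rho}$ with $\rho=2\sqrt{R\delta}$, the expansion $1-\mu\sim\delta^{2s}$ under the constraint $s<1/5$, and the collapse of $\rho^{d}\cdot\rho^{-(d-1)}$ into a single factor $\rho=\BigO(\sqrt{\delta})$ all match the paper and are correct.

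The genuine gap is the step where you control $\LebNum(\partial K,\eps'/2)$ by the ``$(d-1)$-dimensional rate'' $\BigO(\eps'^{-(d-1)})$. The theorem concerns an arbitrary compact $K\subseteq\RR^d$, and the boundary of such a set need not obey any $(d-1)$-dimensional covering bound: if $K$ is a fat Cantor-type set, then $\partial K=K$ has positive Lebesgue measure and $\LebNum(\partial K,\eps)=\Omega(\eps^{-d})$. Only the crude rate $\BigO(\eps^{-d})$ holds in general, and that is precisely what the paper uses, namely $\LebNum(\partial K,L/2)=\BigO(\delta^{-sd})$. Replacing your $\delta^{-s(d-1)}$ by $\delta^{-sd}$ turns the second contribution into $\delta^{1/2-s(2d-1)}$, so the balance $s=1/2-s(2d-1)$ occurs at $s=1/(4d)$ rather than at $s=1/(2(2d-1))$: the critical exponent $1/(4d-2)$ you announce is exactly the one that your unjustified covering estimate manufactures. (For comparison, the paper's own computation also lands on $\BigO(\delta^{s}+\delta^{1/2-s(2d-1)})$ and only verifies that the second term tends to zero when $s<\tfrac{1}{2(2d-1)}$; the exponent it genuinely certifies is $\min(s,\,1/2-s(2d-1))\le 1/(4d)$.) To repair your argument you must either adopt the $d$-dimensional covering rate and accept the weaker balance point, or impose a regularity hypothesis on $\partial K$ (for instance Ahlfors $(d-1)$-regularity) under which your estimate is legitimate --- but that is not available under the hypotheses of the theorem.
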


\begin{proof}
Remark first that if a point $x$ is such that $\d_K(x) \leq
\frac{1}{2}L - \dH(K,K')$, then by definition of the Hausdorff
distance, $\d_{K'}(x) \leq \frac{1}{2}L$. In particular, the
orthogonal projections of $x$ on $K$ and $K'$ are at distance at most
$L$. Said otherwise, the set $\Delta_L(K,K')$ is contained in the
complementary of the $\frac{L}{2} - \delta$ tubular neighborhood of
$K$, with $\delta := \dH(K,K')$. As in the previous proof, we will let
$R = \nr{\d_K}_{E,\infty}$, so that $E$ is contained in the tubular
neighborhood $K^R$.

%% Let us remark that $\Delta_L(K,K')$ does not contain any point too
%% close to $K$:
%% $$ \Delta_L(K,K') \subseteq \RR^d \setminus K^{L - \delta} $$
We now choose $L$ to be $\delta^h$, where $h > 0$, and see for which
values of $h$ we are able to get a converging bound. Using Lemma
\ref{lem:delta-included}, we have:
\begin{equation*}
\Delta_L(K,K') \cap K^R \subseteq \left(\Medial_\mu(K) \cap
(\RR^d\setminus K^{\frac{1}{2}(L - \delta) - 2 \sqrt{R\delta}})
\right)^{2\sqrt{R \delta}}
\end{equation*}
For $h < 1/2$, the radius $\frac{1}{2}(L - \delta) - 2 \sqrt{R\delta}$
will be greater than $L/3$ as soon as as soon as $\delta$ is small
enough. Hence,
\begin{equation}
\Delta_L(K,K') \cap K^R\subseteq \left(\Medial_\mu(K) \cap
(\RR^d\setminus K^{L/3}) \right)^{2\sqrt{R \delta}}
\end{equation}
 The $\mu$ above, given by Lemma \ref{lem:delta-included} can then be
 bounded as follows. Note that the constants in the ``big O'' will
 always positive in the remaining of the proof.
%Using the inequality $(1+x)^{-1/2} \leq 1 - x/4$ for $x \leq 1/2$, we have:
\begin{equation*}
\begin{split}
\mu &\leq \left(1 + \left[\frac{L-\delta}{4R}\right]^2\right)^{-1/2} + 4 \sqrt{\delta/L}\\
% &\leq 1 - \frac{1}{256} \frac{L^2}{R^2} + 4 \sqrt{\frac{\delta}{L}}
&= 1 + \BigO(-\delta^{2h} + \delta^{1/2 - h/2})
\end{split}
\end{equation*}
The term will be asymptotically smaller than $1$ provided that $2h <
1/2 - h/2$ \ie $h < 1/5$, in which case $\mu = 1 -
\BigO(\delta^{2h})$. By definition of the covering number, one has:
\begin{equation}
\begin{split}
\Haus^d(\Delta_L(K,K') \cap K^R) &\leq
\Haus^d\left[\left(\Medial_\mu(K) \cap \left(\RR^d \setminus
K^{L/3}\right)\right)^{2 \sqrt{R\delta}}\right]\\
&\leq \LebNum\left(\Medial_\mu(K) \cap \left(\RR^d \setminus
K^{L/3}\right), 2 \sqrt{R\delta}\right) \times \BigO(\delta^{d/2})
\end{split}
\label{eq:quant:I}
\end{equation}
The covering numbers of the intersection $\Medial_\mu(K) \cap
\left(\RR^d \setminus K^{L/3}\right)$ can be bounded using Theorem
\ref{th:covering:mumedial}:
\begin{equation}
\label{eq:quant:II}
\begin{split}
\LebNum&\left(\Medial_\mu(K) \cap \left(\RR^d \setminus K^{L/2}\right), 2
\sqrt{R\delta}\right) \\
&\qquad= \LebNum(\partial K,L/2)\BigO\left(\left[\frac{\diam(K)/\sqrt{R\delta}}{\sqrt{1-\mu^2}}\right]^{d-1}\right) \\
&\qquad= \LebNum(\partial K,L/2) \BigO\left(\delta^{-(h+\frac 1 2)(d-1)}\right)
\end{split}
\end{equation}
Combining equations \eqref{eq:quant:I} and \eqref{eq:quant:II}, and
using the (crude) estimation $\LebNum(\partial K,L/2) = \BigO(1/L^d) =
\BigO(\delta^{-hd})$,
\begin{equation*}
\begin{split}
\Haus^d(\Delta_L(K,K') \cap K^R) &
\leq \LebNum(\partial K,L/2) \BigO(\delta^{-h(d - 1) - \frac{1}{2}(d-1)+\frac{1}{2}d})\\
&\leq  \BigO\left(\delta^{\frac{1}{2} - h(2d - 1)}\right)
\end{split}
\end{equation*}
Hence, following the proof of Proposition \ref{prop:proj-stab-nonquant},
\begin{equation*}
\begin{split}
\nr{\p_{K'} - \p_{K}}_{\LL^1(E)} &\leq L \Haus^d(E) +
\Haus^d(\Delta_{L}(K,K') \cap E) \diam(K \cup K') \\
& = \BigO(\delta^{h} + \delta^{1/2 - h(2d - 1)})
\end{split}
\end{equation*}
The second term converges to zero as $\delta = \d_H(K,K')$ does if $h
< \frac{1}{2(2d - 1)}$. This concludes the proof.
\end{proof}

\bibliographystyle{amsplain}
\bibliography{mumedial}

\end{document}